\newtheorem{theorem}{Theorem}[section]
\newtheorem{lemma}[theorem]{Lemma}
\newtheorem{proposition}[theorem]{Proposition}
\newtheorem{corollary}[theorem]{Corollary}
\newcommand{\brac}[1]{\left( #1\right)}
\newcommand{\red}[1]{\textcolor{red}{#1}}
\DeclareMathOperator{\vcd}{vcd}
\DeclareMathOperator{\G}{G}
\DeclareMathOperator{\kernel}{Ker}
\DeclareMathOperator{\Image}{Im}
\DeclareMathOperator{\Gal}{Gal}
\DeclareMathOperator{\Spin}{Spin}
\DeclareMathOperator{\N}{N}
\DeclareMathOperator{\M}{M}
\DeclareMathOperator{\HH}{H}
\DeclareMathOperator{\Br}{Br}
\DeclareMathOperator{\charac}{char}
\DeclareMathOperator{\cd}{cd}
\DeclareMathOperator{\GO}{GO}
\DeclareMathOperator{\Oo}{O}
\DeclareMathOperator{\PGO}{PGO}
\DeclareMathOperator{\RPGO}{RPGO}
\DeclareMathOperator{\Sn}{Sn}
\DeclareMathOperator{\R}{R}
\DeclareMathOperator{\W}{W}
\font\brus=wncyr10.240pk scaled 1200 .240pk
\title{The norm principle for type $D_n$ groups over complete discretely valued fields}
\author{Nivedita Bhaskhar}
\address{Department of Mathematics
University of California at Los Angeles \\
Los Angeles, CA 90095-1555}
\email{nbhaskh@math.ucla.edu}
\author{Vladimir Chernousov}
\address{Department of Mathematical Sciences, University of Alberta, Edmonton,
Alberta, Canada T6G 2G1}
\email{vladimir@ualberta.ca}
\author{Alexander Merkurjev}
\address{Department of Mathematics
University of California at Los Angeles \\
Los Angeles, CA 90095-1555}
\email{merkurev@math.ucla.edu}
\thanks{The second author was partially supported by the Canada Research Chairs
Program and an NSERC research grant. The work of the third author has been supported by the NSF grant DMS \#1160206}
\begin{document}
\maketitle

\begin{abstract}
Let $K$ be a complete discretely valued field with residue field $k$ with $\mathrm{char}(k)\neq 2$. Assuming that the norm principle holds for extended Clifford groups $\Omega(q)$ for every even dimensional non-degenerate quadratic form $q$ defined over any finite extension of $k$, we show that it holds for extended Clifford groups $\Omega(Q)$ for every even dimensional non-degenerate quadratic form $Q$ defined over $K$.
\end{abstract}

\section{Introduction}
Let $K$ be a field and $T$, a commutative linear algebraic group defined over $K$. Given $L/K$, a finite separable field extension, one can define the \textit{norm homomorphism} $\N_{L/K} : T(L)\to T(K)$ which sends $t \leadsto \prod_{\gamma} \gamma(t)$ where $\gamma$ runs over cosets of $\Gal\brac{K^{sep}/L}$ in $\Gal\brac{K^{sep}/K}$. The definition of the norm homomorphism can be extended to $K$-\'etale algebras in a similar manner. Note that if $T = \mathbb{G}_m$, then $\N_{L/K} : T(L)\to T(K)$ is precisely the usual norm $\N_{L/K} : L^* \to K^*$.

Now let $G$ be a linear algebraic group defined over $K$ and let $f : G\to T$ be an algebraic group homomorphism defined over $K$. Consider the following diagram: 
\[\begin{tikzcd}
    G(L) \arrow{r}{f(L)}  & T(L) \arrow{d}{{\N_{L/K}}} \\
    G(K) \arrow{r}{f(K)}       & T(K)
    \end{tikzcd}
    \]
    
We say that the \textit{norm principle} holds for $f:G\to T$ over a finite separable field extension (or \'etale algebra) $L/K$ if $\N_{L/K}(\Image f(L)) \subseteq \Image f(K)$. We say that the \textit{norm principle} holds for $f:G\to T$ if for every finite separable field extension (equivalently for every \'etale algebra) $L/K$ , $\N_{L/K}(\Image f(L)) \subseteq \Image f(K)$. 

Suppose further that the commutator subgroup $G'$ of $G$ is defined over $K$. Then every homomorphism $f:G\to T$ factors through the natural homomorphism $\tilde{f} : G\to G/G'$ and it is an easy check that the norm principle for $\tilde{f}$ (over $L/K$) implies the norm principle for $f$ (over $L/K$). We say that the \textit{norm principle} holds for $G$ (over $L/K$) if it holds for $\tilde{f}$ (over $L/K$).

Let $Q$ be a quadratic form over $K$. The classical norm principle of Scharlau which asserts that norms of similarity factors of $Q_L$ are themselves similarity factors of $Q$ can be restated in this context to say that the norm principle holds for the multiplier map $\M : \GO(Q) \to \mathbb{G}_m$. Similarly Knebusch's norm principle which states that norms of spinor norms of $Q_L$ are spinor norms of $Q$ can be reformulated as the norm principle holding for the spinor norm map $\underline{\mu} : \Gamma^+(Q) \to \mathbb{G}_m$. 

Norm principles have been previously studied in  (\cite{Gi93}, \cite{Me96}), especially in conjunction with the rationality or the R-triviality of the algebraic group in question. In (\cite{BM00}), it was established that the norm principle holds in general for all reductive groups of classical type without $D_n$ components. The $D_n$ case was investigated in (\cite{Bh16}) and a scalar obstruction defined up to spinor norms, whose vanishing would imply the norm principle, was given. However, the triviality of this scalar obstruction is far from clear and the question whether the norm principle holds for reductive groups with type $D_n$ components still remains open.

If $K$ is a number field, the norm principle was proved in full generality for all reductive groups by P. Gille (\cite{Gi97}), so the first widely open and very interesting case is when $K$ is the function field $k(C)$  of a curve $C$ defined over a number field $k$ and the group $G$ in question is of classical type with the semisimple part $G'={\rm Spin}(Q)$. As we show in the last section of the paper, 
the validity of the norm principle over $K$ is closely related to the triviality of the kernel of the natural map $\HH^1(K,G')\to \prod_v \HH^1(K_v,G')$ where $v$ runs through a set of discrete valuations of $K$.  Therefore the (traditional) local-global approach leads us first to look in detail over completions $K_v$. 

With this motivation in mind, in this paper, we investigate the $D_n$ case over an arbitrary complete discretely valued field $K$ with residue field $k$ and $\mathrm{char}(k)\neq 2$, restricting ourselves to type $D_n$ groups arising from quadratic forms. In the main result of the paper, we show that if the norm principle holds for such groups defined over all finite extensions of the residue field $k$, then it holds for such groups defined over $K$ (c.f. Theorem \ref{maintheorem}). This yields examples of complete discretely valued fields with residue fields of virtual cohomological dimension $\leq 2$ over which the norm principle holds for the groups under consideration (c.f. Corollary \ref{corollary-examples}). As a further application, we also relate the possible failure of the norm principle to the nontriviality of certain Tate-Shaferevich sets (Section \ref{tateshaferevichsection}).

\subsection*{Notations}
Let $K$ be a field of characteristic not $2$ and $(V, Q)$, a non-degenerate quadratic space of dimension $2n$ over $K$ where $n\in \mathbb{Z}_{>0}$. Let $Z/K$ denote the discriminant extension of $Q$ and let $\psi$ denote the non-trivial $K$-automorphism of $Z$.

Let $\mu$ denote the center of $\Spin(Q)$. Recall that $\mu = \R_{Z/K}\brac{\mu_2}$ when $n$ is even and $\mu=\mu_{4[Z]}:=\kernel\brac{\R_{Z/K}\mu_4\xrightarrow{\mathrm{Norm}}\mu_4}$ when $n$ is odd. Let $\Omega(Q)$ be the \textit{extended Clifford group}\footnote{If $\dim Q = 2$, we set $\Omega(Q)$ to be the commutative group $R_{Z/K}\mathbb{G}_m$, for which the norm principle holds.} of $Q$. If $\dim Q = 2n \geq 4$, this reductive group has center $\R_{Z/K}\mathbb{G}_m$ and is an \textit{envelope} of $\Spin(Q)$ (\cite{BM00}, Ex 4.4). Further it is an extension of the projective group of proper similitudes $\PGO^+(Q)$ by $\R_{Z/K}\mathbb{G}_m$ :
\[1\to \R_{Z/K}\mathbb{G}_m\to \Omega(Q)\xrightarrow{\chi'} \PGO^+(Q)\to 1\]

\section{Reductions}

\subsection{Another formulation of the norm principle}
\label{section-restatingnormprinciple}
Let $1\to J\to G_1\to G_2 \to 1$ be a central $K$-isogeny of reductive groups $G_1, G_2/K$. We recall another formulation of the norm principle for the connecting map $\delta : G_2(-)\to \HH^1(-, J)$ and its relation to the norm principle of an associated map $f : G\to T$. This discussion is taken from (\cite{Me96}, 3.10).

Since $J$ is commutative, one can define norm maps (correstriction) $\N_{L/K}: \HH^1(L, J)\to \HH^1(K, J)$ for finite separable extensions $L/K$. Consider the following diagram: \[\begin{tikzcd}
    G_2(L) \arrow{r}{\delta(L)}  & \HH^1(L, J) \arrow{d}{{\N_{L/K}}} \\
    G_2(K) \arrow{r}{\delta(K)}       & \HH^1(K, J)
    \end{tikzcd}
    \]
    
We say that the \textit{norm principle} holds for $\delta : G_2(-)\to \HH^1(-, J)$ over a finite separable field extension (or \'etale algebra) $L/K$ if $\N_{L/K}(\Image \delta(L)) \subseteq \Image \delta(K)$. We say that the \textit{norm principle} holds for $\delta$ if the norm principle holds for $\delta$ over every finite separable field extension (equivalently over every \'etale algebra) $L/K$. 

Let $T'$ be a quasi-trivial torus\footnote{Product of Weil restrictions of split tori.} containing $J$. The associated map $f : G\to T$ (where $G/K$ is reductive and $T$ is commutative) is determined by the following commutative diagram (Figure 1) with exact rows and columns:

\begin{figure}[hh]
\[\begin{tikzcd}
& 1\arrow{d} & 1\arrow{d} & & \\ 
1\arrow{r} & J\arrow{r}\arrow{d} & G_1 \arrow{r}\arrow{d} & G_2 \arrow{r}\arrow[d, "id"] & 1 \\
1\arrow{r} & T' \arrow{r}\arrow[d, "f'"] & \textcolor{red}{G} \arrow[r, "h"] \arrow[d, "f"] & G_2 \arrow{r} & 1 \\
& \textcolor{red}{T} \arrow[r, "id"]\arrow{d} & \textcolor{red}{T} \arrow{d} & & \\ 
&1 & 1 & & \\ 
\end{tikzcd}\]
\caption{Restating the norm principle}
\end{figure}

\newpage

\begin{lemma}
\label{lemmanormprinciplerestatement} 
Let $L/K$ be a finite separable extension and let $G_1, G_2, J$ and $f: G\to T$ be as above. Then the norm principle holds for $f : G\to T$ over $L/K$ if and only if it holds for $\delta : G_2(-)\to \HH^1(-, J)$ over $L/K$.
\end{lemma}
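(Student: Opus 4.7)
The plan is to reduce both norm principles to a single structural identity relating the image of $f$ on $K$-points to the image of $\delta$ on cohomology, and then exploit the naturality of corestriction.

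First I would show the following key identity: for every field extension $E/K$,
\[
\Image f(E) \;=\; \partial_E^{-1}\bigl(\Image \delta(E)\bigr),
\]
where $\partial_E : T(E) \to \HH^1(E,J)$ is the connecting map attached to the left column $1 \to J \to T' \to T \to 1$ in Figure 1. To do this, I would read off from the middle row $1 \to T' \to G \to G_2 \to 1$ the fact that $\HH^1(E,T') = 0$ (since $T'$ is quasi-trivial), which forces $\partial_E$ to be surjective; in particular $G(E) \to G_2(E)$ is also surjective. Next, from the middle column $1 \to G_1 \to G \to T \to 1$, the image of $f(E)$ is the kernel of the pointed connecting map $T(E) \to \HH^1(E, G_1)$. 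Using that a lift of $t \in T(E)$ to $T'(\overline E) \hookrightarrow G(\overline E)$ already serves as a $G$-lift, one checks that this connecting map factors as
\[
T(E) \xrightarrow{\partial_E} \HH^1(E,J) \xrightarrow{i_*} \HH^1(E,G_1),
\]
where $i : J \hookrightarrow G_1$ is induced from the top row. Finally, the long exact sequence attached to the top row $1 \to J \to G_1 \to G_2 \to 1$ identifies $\ker i_*$ with $\Image \delta(E)$. Putting these three observations together yields the displayed identity.

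Given this identity, the lemma follows from a short diagram chase using the fact that the norm (corestriction) $\N_{L/K}$ commutes with the connecting map $\partial$, i.e.\ $\partial_K \circ \N_{L/K} = \N_{L/K} \circ \partial_L$ on $T$ and $\HH^1(-,J)$. For the forward direction, given $\alpha \in \Image \delta(L)$, use surjectivity of $\partial_L$ to write $\alpha = \partial_L(t)$ with $t \in T(L)$; the identity gives $t \in \Image f(L)$, so the norm principle for $f$ yields $\N_{L/K}(t) \in \Image f(K)$, and applying $\partial_K$ shows $\N_{L/K}(\alpha) \in \Image \delta(K)$. For the converse direction, given $t \in \Image f(L)$, its image $\partial_L(t)$ lies in $\Image \delta(L)$; the norm principle for $\delta$ places $\N_{L/K}(\partial_L(t)) = \partial_K(\N_{L/K}(t))$ in $\Image \delta(K)$, and the identity again gives $\N_{L/K}(t) \in \Image f(K)$.

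The main potential obstacle is purely bookkeeping: verifying the factorization of the middle-column connecting map through $\HH^1(-,J)$ with the correct sign, and checking that the corestriction commutes with $\partial$ (which is standard, as norm maps on cohomology commute with connecting morphisms coming from short exact sequences of commutative group schemes, and are compatible with restriction from the long exact sequence of the left column). Once these compatibilities are in hand, the equivalence of the two norm principles over $L/K$ is immediate.
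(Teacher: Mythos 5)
Your proof is correct, but it pivots on a different key lemma than the paper's. The paper argues by two element chases built on the anticommutativity of the square relating $f$, $h$, $\delta$ and the connecting map $\delta_1$ of the left column (quoted from \cite{Me96}, Lemma 3.11), together with the surjectivity of $h$ coming from $\HH^1(-,T')=1$. You instead isolate the set-theoretic identity $\Image f(E)=\partial_E^{-1}\brac{\Image \delta(E)}$, proved by noting that a lift of $t\in T(E)$ through $T'$ already serves as a lift through $G$ (so the connecting map of the middle column factors as $i_*\circ \partial_E$) and then invoking exactness of the top row at $\HH^1(E,J)$. This replaces the citation of the anticommutative square by a direct verification, and it absorbs the surjectivity of $h(K)$ and the correction by an element of $T'(K)$ --- which the paper handles explicitly in its converse direction --- into the single inclusion $\partial_K^{-1}\brac{\Image\delta(K)}\subseteq \Image f(K)$. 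Both arguments ultimately rest on the same two inputs, namely the quasi-triviality of $T'$ and the compatibility of the corestriction with the connecting map of the abelian column (which the paper also uses, implicitly, when it writes $\delta_1(K)(\N_{L/K}(t))=\N_{L/K}(j^{-1})$); the gain in your version is mainly structural clarity, at the cost of checking the factorization of a connecting map with nonabelian target. The sign worry you flag is harmless: $J$ is central, so $\delta$ is a group homomorphism and $\Image\delta(E)=\kernel(i_*)$ is a subgroup closed under inversion, making the identity insensitive to the choice of cocycle convention.
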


\begin{proof}
Let $1\to T'\to G\xrightarrow{h}G_2\to 1$ be the exact row as in the diagram above. Since $T'$ is a quasi-trivial torus, this exact sequence induces surjective maps $h(L): G(L)\to G_2(L)$ for all $L/K$.  Let $\delta_1 : T(-)\to \HH^1(-, J)$ be the connecting map of the first exact column in the diagram above. By (\cite{Me96}, Lemma 3.11), the following square is anticommutative.

 \[\begin{tikzcd}
    \textcolor{red}{g \in\ } G(L) \arrow{r}{h(L)}\arrow[d, "f(L)"]  & \textcolor{red}{g_2\in\ }G_2(L)\arrow[d, "\delta(L)"] \\
     \textcolor{red}{t \in\ }T(L) \arrow{r}{\delta_1(L)} & \textcolor{red}{j \in\  }  \HH^1(L, J)
    \end{tikzcd} \]
    
Assume that the norm principle holds for $f$ over $L/K$. Let $g_2\in G_2(L)$ and $\delta(L)(g_2) = j$. Since $h(L)$ is surjective, pick $g\in G(L)$ such that $h(L)(g)= g_2$ and let $t = f(L)(g)\in T(L)$. Thus $\delta_1(L)(t) = j^{-1}$ and $\delta_1(K)(\N_{L/K}(t)) =\N_{L/K}(j^{-1})$. Since the norm principle holds for $f$, $\N_{L/K}(t)= f(K)(\tilde{g})$ for some $\tilde{g}\in G(K)$. Then $\N_{L/K}(j) = \delta(K)[h(K)(\tilde{g})]$. Thus $\N_{L/K}(j)\in \Image \delta(K)$ and hence the norm principle holds for $\delta$ over $L/K$.

Conversely, let the norm principle hold for $\delta$ over $L/K$. Let $g\in G(L)$ and $f(L)(g) = t$. Set $h(L)(g) = g_2\in G_2(L)$ and $\delta(L)(g_2) = j$. Thus $\delta_1(L)(t) = j^{-1}$ and $\delta_1(K)(\N_{L/K}(t)) =\N_{L/K}(j^{-1})$ as before. Since the norm principle holds for $\delta$, $\N_{L/K}(j)= \delta(K)(\tilde{g_2})$ for some $\tilde{g_2}\in G_2(K)$. Since $h(K)$ is surjective, pick $\tilde{g}\in G(K)$ such that $h(K)(\tilde{g}) = \tilde{g_2}$. Then $\N_{L/K}(t) =  f(K)(\tilde{g})f'(K)(t')$ for some $t'\in T'(K)$. Thus $\N_{L/K}(t)\in \Image f(K)$ and hence the norm principle holds for $f$ over $L/K$. \end{proof}

Let $G$ be a reductive group defined over $K$ whose simple components are of classical type and let $T$ be a commutative group defined $K$. Then (Thm 1.1, \cite{BM00}) establishes that if the Dynkin diagram of $G$ does not contain connected components $D_n$ for $n\geq 4$, the norm principle holds for any group homomorphism $G\to T$. We would like to  investigate the norm principle for $G\to T$ in the remaining case when $G$ has simple components of type $D_n$, under the further simplifying assumption that these simple components have simply connected covers $\Spin(Q_i)$ arising from quadratic forms $Q_i$ over $K$.

By following the reductions in (\cite{BM00}), it is easy to see that we need only to check whether the norm principle holds for the  group $\Omega(Q)$, i.e. for the canonical map $\Omega(Q) \to \frac{\Omega(Q)}{\left[\Omega(Q),\Omega(Q)\right]}$ for any non-degenerate even dimensional quadratic form $Q/K$. If $\dim Q  = 2$, as noted before, the norm principle holds for the commutative group $\Omega(Q)$.

\subsection{Maps $S$ and $\alpha$}

We now restate the norm principle for $\Omega(Q)$ when $\dim Q =  2n \geq 4$ in two other equivalent forms, which will be used in the rest of the paper.

Let $T$ denote $\frac{\Omega(Q)}{\left[\Omega(Q),\Omega(Q)\right]}$ and let $T'$ denote the quasi-trivial torus $R_{Z/K}\mathbb{G}_m$ containing $\mu$. Using the fact that the semisimple part of $\Omega(Q)$ is $\Spin(Q)$, Figure 1 above yields the following commutative diagram:

\[\begin{tikzcd}
& 1\arrow{d} & 1\arrow{d} & & \\ 
1\arrow{r} & \mu\arrow{r}\arrow{d} & \Spin(Q) \arrow{r}\arrow{d} & \PGO^+(Q)\arrow{r}\arrow[d, "id"] & 1 \\
1\arrow{r} & R_{Z/K}\mathbb{G}_m \arrow{r}\arrow[d, "f'"] & \textcolor{red}{\Omega(Q)} \arrow[r, "h"] \arrow[d, "f"] &\PGO^+(Q) \arrow{r} & 1 \\
& \textcolor{red}{T} \arrow[r, "id"]\arrow{d} & \textcolor{red}{T} \arrow{d} & & \\ 
&1 & 1 & & \\ 
\end{tikzcd}\]

Let $S : \PGO^+(Q)(-)\to \HH^1(-, \mu)$ denote the connecting map of the first exact row. By Lemma \ref{lemmanormprinciplerestatement}, the norm principle for $\Omega(Q)\to T$ over $L/K$ holds if and only if it holds for $S$ over $L/K$.

For any $L/K$, the short exact sequence $1 \to \mu \to \Spin(Q)\to \PGO^+(Q)\to 1$ gives rise to the long exact sequence 
\[\ldots \to \PGO^+(Q)(L)\xrightarrow{S(L)} \HH^1(L,\mu)\xrightarrow{\alpha(L)} \HH^1(L,\Spin(Q))\to \ldots.\] 
Hence we can deduce that the norm principle holds for $S$ over $L/K$ if and only if the following, which we call the norm principle for $\alpha: \HH^1(-,\mu)\to \HH^1(-,\Spin(Q))$ over $L/K$, holds: 

\textit{For every $u\in \kernel\brac{\HH^1(L,\mu)\xrightarrow{\alpha(L)} \HH^1(L, \Spin(Q))}$, the element $\N_{L/K}(u)$ belongs to $\kernel\brac{\HH^1(K,\mu)\xrightarrow{\alpha(K)} \HH^1(K, \Spin(Q))}$}.

Thus the above discussion gives the following:
\begin{lemma}
\label{lemmaequivnormprinciple} Let $L/K$ be a finite separable field extension. Then the following are equivalent
\begin{enumerate}
\item
The norm principle holds for $\Omega(Q)$ over $L/K$
\item
The norm principle holds for $S : \PGO^+(Q)(-)\to \HH^1(-, \mu)$ over $L/K$
\item
The norm principle holds for $\alpha : \HH^1(-, \mu) \to \HH^1(-, \Spin(Q))$ over $L/K$
\end{enumerate}

\end{lemma}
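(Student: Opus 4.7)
The three statements should be equated by combining the general restatement result already proved (Lemma \ref{lemmanormprinciplerestatement}) with the exactness of the cohomology sequence attached to $1 \to \mu \to \Spin(Q) \to \PGO^+(Q) \to 1$.

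For the equivalence $(1) \Leftrightarrow (2)$, the plan is simply to match the diagram displayed in this subsection against the general setup of Figure 1. Take $G_1 = \Spin(Q)$, $G_2 = \PGO^+(Q)$, $J = \mu$, and $T' = R_{Z/K}\mathbb{G}_m$. The torus $T'$ is quasi-trivial and contains $\mu$, so the construction from Section \ref{section-restatingnormprinciple} produces an extension $1 \to T' \to G \to G_2 \to 1$ whose middle term is an envelope of $\Spin(Q)$ with center $T'$. Since $\Omega(Q)$ is (by the definition recalled in the introduction) precisely such an envelope with center $R_{Z/K}\mathbb{G}_m$, we may identify $G$ with $\Omega(Q)$, and then $T = G/T' \cdot [G,G]$ is the quotient $\Omega(Q)/[\Omega(Q),\Omega(Q)]$ as defined. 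The map $f : \Omega(Q) \to T$ is the canonical quotient map, and Lemma \ref{lemmanormprinciplerestatement} then gives the norm principle for $f$ over $L/K$ if and only if it holds for the connecting map $\delta = S : \PGO^+(Q)(-) \to H^1(-,\mu)$ over $L/K$.

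For the equivalence $(2) \Leftrightarrow (3)$, I would use naturality of the long exact cohomology sequence with respect to corestriction. The long exact sequence identifies $\Image S(L) = \kernel \alpha(L)$ for every finite separable $L/K$, and similarly at the level of $K$. The norm principle for $S$ over $L/K$ asks that $\N_{L/K}(\Image S(L)) \subseteq \Image S(K)$, which under these identifications is precisely the statement that $\N_{L/K}$ sends $\kernel \alpha(L)$ into $\kernel \alpha(K)$. This is exactly the norm principle for $\alpha$ as formulated above. The one point to verify is that corestriction on $H^1(-,\mu)$ respects the connecting map, which is the standard naturality of the long exact sequence under transfer.

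There is no substantive obstacle here; the lemma is essentially a bookkeeping assembly of (i) the diagram-chasing equivalence already established in Lemma \ref{lemmanormprinciplerestatement} and (ii) exactness plus functoriality of Galois cohomology. The only mildly delicate point is checking that the envelope $\Omega(Q)$ of the introduction matches the middle column constructed abstractly in Figure 1, but this is immediate from the fact that both are central extensions of $\PGO^+(Q)$ by the same quasi-trivial torus $R_{Z/K}\mathbb{G}_m$ inducing the given central isogeny on semisimple parts.
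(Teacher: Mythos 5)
Your proposal is correct and follows essentially the same route as the paper, which establishes $(1)\Leftrightarrow(2)$ by applying Lemma \ref{lemmanormprinciplerestatement} to the displayed diagram with $G_1=\Spin(Q)$, $G_2=\PGO^+(Q)$, $J=\mu$, $T'=\R_{Z/K}\mathbb{G}_m$ and $G=\Omega(Q)$, and $(2)\Leftrightarrow(3)$ from the exactness identification $\Image S=\kernel\alpha$ over both $L$ and $K$. (The naturality of corestriction with respect to the connecting map that you flag is not actually needed, since the same norm map $\N_{L/K}$ on $\HH^1(-,\mu)$ appears in both formulations.)
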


\subsection{Auxiliary maps $i$ and $j$}
\label{section-auxillarymaps}
We recall the (explicit) definitions of useful auxiliary maps $i : \HH^1(-,\mu_2)\to \HH^1(-,\mu)$ and  $j: \HH^1(-,\mu)\to \HH^1(-,\mu_2)$. 

Recall the following commutative diagram with the two complete rows and columns exact: 

\[\begin{tikzcd}
 & 1 \arrow{d} & 1 \arrow{d} &  & \\
  & \mu_2 \arrow[r, "id"] \arrow{d} & \mu_2 \arrow{d} &  & \\
   1\arrow{r} & \mu \arrow{r}\arrow{d} & \Spin(Q)\arrow{r}\arrow{d} & \PGO^+(Q) \arrow[d, "id"] \arrow{r} & 1\\
      1\arrow{r} & \mu_2 \arrow{d}\arrow{r}& \Oo^+(Q)\arrow{d}\arrow{r} & \PGO^+(Q) \arrow{r} & 1\\
     & 1 & 1  &  &  \end{tikzcd}\]

%
    
For every $L/K$, the long exact sequence of cohomology gives rise to maps $i(L) : \HH^1(L, \mu_2)\to \HH^1(L, \mu)$ and $j(L) : \HH^1(L, \mu)\to \HH^1(L, \mu_2)$ which fit into the diagram 

 \begin{figure}[hh]
\[\begin{tikzcd}
 &  & \HH^1(L, \mu_2) \arrow[r, "id"] \arrow[d, "i(L)"] & \HH^1(L, \mu_2)\arrow[d, "i'(L)"] & \\
 & \PGO^+(Q)(L)\arrow[d, "id"] \arrow[r, "S(L)"] & \HH^1(L, \mu) \arrow[r, "\alpha(L)"] \arrow[d, "j(L)"] & \HH^1(L, \Spin(Q))\arrow{d} & \\
     & \PGO^+(Q)(L) \arrow[r, "\M(L)"]  & \HH^1(L, \mu_2) \arrow{r} & \HH^1(L, \Oo^+(Q)) &   \end{tikzcd}\]
     \label{commdiagram-iandj}
\caption{Auxiliary maps $i$ and $j$}
\end{figure}

Here $\M(L) : \PGO^+(Q)(L)\to \HH^1(L, \mu_2)$ is the multiplier map. The natural map $i'(L) : \HH^1(L,\mu_2)\to \HH^1(L, \Spin(Q))$ in the above diagram will also be used later.

Let $\mathrm{Sn}(L) : \Oo^+(Q)(L)\to \HH^1(L, \mu_2)$ denote the spinor norm map. The maps $i$ and $j$ fit into the following commutative diagram with exact columns (\cite{KMRT98}, Prop 13.33 \& 13.36). 

\begin{figure}[hh]
\[
\begin{tikzcd}
\Oo^+(Q)(L) \arrow{rr}{\mathrm{Sn}(L)} \arrow{d}{\pi(L)} & &   \HH^1(L,\mu_2) \arrow{d}{i(L)} \\
 \PGO^+(Q)(L) \arrow{rr}{S(L)} \arrow{d}{\M(L)} & & \HH^1\left(L, \mu\right)\arrow{d}{j(L)}\\
  \HH^1(L,\mu_2)&  =  &  \HH^1(L,\mu_2)
\end{tikzcd}\]
\label{commdiagram-spinornorms}
\caption{Relation to spinor norms}
\end{figure}

The explicit descriptions of $i$ and $j$ depend on the parity of $n=\dim(Q)/2$. 
\begin{enumerate}
\item[-]
If $n$ is even, $\HH^1(K, \mu) = \frac{Z^*}{Z^{*2}}$. Then $i(K) : \frac{K^*}{K^{*2}} \to \frac{Z^*}{Z^{*2}}$ is the inclusion map and $j(K) : \frac{Z^*}{Z^{*2}}\to \frac{K^*}{K^{*2}}$ sends $[z]\leadsto [\N_{Z/K}(z)]$ for $z\in Z^*$.
\item[-] 
If $n$ is odd, $\HH^1(K,\mu) = \frac{U(K)}{U_{0}(K)}$ where $U\subset \mathbb{G}_m \times \R_{Z/K}\mathbb{G}_m$ is the subgroup defined by $U(K) = \{(f,z)\in K^*\times Z^*| f^4 = \N_{Z/K}(z)\}$ and $U_0\subset U \subset \mathbb{G}_m \times \R_{Z/K}\mathbb{G}_m$ is the algebraic subgroup defined by $U_0(K) = \{(\N_{Z/K}(z),z^4)\in K^*\times Z^*| z\in Z^*\}$.

Then $i(K): \frac{K^*}{K^{*2}} \to \frac{U(K)}{U_0(K)}$ is the map sending $f K^{*2}\leadsto [f, f^2]$ for $f\in K^*$. Finally $j(K) : \frac{U(K)}{U_0(K)} \to \frac{K^*}{K^{*2}}$ sends $[f,z]\leadsto \N_{Z/K}(z_0)K^{*2}$ where $z_0\in Z^*$ is such that $z_0\psi(z_0)^{-1} = f^{-2}z$. 
\end{enumerate}

We end this section with one more useful reduction, which is a direct consequence of Knebusch's norm principle. 

\begin{lemma} Let $L/K$ be a finite separable extension and $u\in \kernel(\alpha(L))$. If $j(L)(u) = 1\in \HH^1(L, \mu_2)$, then $\N_{L/K}(u)\in  \kernel(\alpha(K))$. 
\label{lemmaspinornorm}
\end{lemma}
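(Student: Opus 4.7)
The plan is to trace $u$ through Figure 2, reduce to an element in the image of $i(L)$, and then apply the classical Knebusch norm principle together with the functoriality of corestriction.

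First I would use exactness of the long sequence
$\PGO^+(Q)(L)\xrightarrow{S(L)}\HH^1(L,\mu)\xrightarrow{\alpha(L)}\HH^1(L,\Spin(Q))$
to lift $u \in \ker(\alpha(L))$ to an element $g\in \PGO^+(Q)(L)$ with $S(L)(g)=u$. Next I would exploit the commutativity of the lower square in Figure 2, namely $\M(L)=j(L)\circ S(L)$, together with the hypothesis $j(L)(u)=1$, to conclude $\M(L)(g)=1\in \HH^1(L,\mu_2)$. Exactness of
$\Oo^+(Q)(L)\xrightarrow{\pi(L)}\PGO^+(Q)(L)\xrightarrow{\M(L)}\HH^1(L,\mu_2)$
(coming from the bottom row of the diagram in Section \ref{section-auxillarymaps}) then provides some $h\in \Oo^+(Q)(L)$ with $\pi(L)(h)=g$.

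Applying the commutativity of the top square of Figure 2 gives
$u = S(L)(\pi(L)(h)) = i(L)(\mathrm{Sn}(L)(h))$,
so $u$ lies in the image of $i(L)$. At this point I invoke Knebusch's norm principle, restated in the introduction as the norm principle for the spinor norm map $\mathrm{Sn}:\Oo^+(Q)\to \HH^1(-,\mu_2)$: it produces $\tilde{h}\in \Oo^+(Q)(K)$ with $\N_{L/K}(\mathrm{Sn}(L)(h)) = \mathrm{Sn}(K)(\tilde{h})$.

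Finally, corestriction in Galois cohomology is functorial in the coefficient group, so the map $i$ induced by the inclusion $\mu_2\hookrightarrow \mu$ commutes with $\N_{L/K}$. Hence
$\N_{L/K}(u) = \N_{L/K}(i(L)(\mathrm{Sn}(L)(h))) = i(K)(\mathrm{Sn}(K)(\tilde{h})) = S(K)(\pi(K)(\tilde{h}))$,
which belongs to $\Image(S(K)) = \ker(\alpha(K))$. The proof is essentially a diagram chase: there is no real obstacle beyond carefully invoking Knebusch's norm principle as the single nontrivial input and checking that $i$ is compatible with corestriction.
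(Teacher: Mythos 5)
Your proposal is correct and follows essentially the same route as the paper: lift $u$ along $S(L)$, use $j(L)(u)=1$ and the commutative diagram relating $S$, $\M$, $\mathrm{Sn}$, $i$, $j$ to write $u=i(L)(\mathrm{Sn}(L)(h))$, then apply Knebusch's norm principle and the compatibility of corestriction with $i$. The only difference is that you spell out the diagram chase that the paper compresses into ``by Figure 3 we see that $u$ is the image of a spinor norm'' (and the relevant diagram is the one labelled ``Relation to spinor norms,'' not the earlier one); mathematically the two arguments coincide.
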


\begin{proof} Since $u\in \kernel\brac{\alpha(L)}$, there exists $[g]\in \PGO^+(Q)(L)$ such that $S(L)([g]) = u$. Since $j(L)(u) = 1$, by Figure 3 above, we see that $u$ is the image of a spinor norm of $Q_L$, i.e $u=i(L)(\Sn(h))$ for some $h\in \Oo^+(Q)(L)$. By Knebusch's norm principle, $\N_{L/K}(\Sn(h)) = \Sn(\tilde{h})$ for some $\tilde{h}\in \Oo^+(Q)(K)$. Hence $\N_{L/K}(u) = i(K)\brac{\Sn(\tilde{h})} = S(K)(\pi(\tilde{h}))$. Therefore $\alpha(K)(\N_{L/K}(u)) = 1$. \end{proof}

\subsection{A square diagram}
\label{sectionsquare}
Using the auxiliary maps $i$ and $j$ just defined, we construct a square diagram (SQ) by piecing together the two classical norm principles of Scharlau and Knebusch. It is shown that the norm principle holds for $\Omega(Q)$ precisely when the square is commutative for all finite separable extensions. This reformulation yields further reductions for the norm principle question.

Using Figure 2 in Section \ref{section-auxillarymaps}, for each finite separable extension $L/K$, define the subgroup $H(L) \subseteq \HH^1(L, \mu)$ as follows:

\begin{align*}
H(L) &:= \{x\in \HH^1(L, \mu) \ |\ j(L)(x) \in \Image\brac{\M(L)}  \}\\
& = \{x\in \HH^1(L, \mu) \ |\ \alpha(L)(x) \in \Image\brac{i'(L)}  \}.
\end{align*}

Recall that $i'(L) : \HH^1(L, \mu_2)\to \HH^1(L, \Spin(Q))$ induces the group homomorphism $i''(L) : L^*/L^{*2}\to L^*/\Sn(Q_L) \subseteq \HH^1(L, \Spin(Q))$ sending $fL^{*2}\leadsto [f]$ for each $f\in L^*$. Thus $\alpha(L)$ induces the following map \[\tilde{\alpha}(L) : H(L)\to L^*/\Sn(Q_L).\]
 
 \begin{lemma}
$\tilde{\alpha}(L) : H(L)\to L^*/\Sn(Q_L)$ is a group homomorphism.
\label{lemmaalphaisagrouphom}
\end{lemma}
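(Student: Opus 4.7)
My plan is to decompose elements of $H(L)$ into pieces coming from $S(L)$ and $i(L)$ so that the homomorphism property of $\tilde\alpha(L)$ becomes transparent from the fact that $S(L)$ and $i(L)$ are themselves group homomorphisms. Concretely, for $x \in H(L)$ the condition $j(L)(x) \in \Image(\M(L))$ together with the identity $\M(L) = j(L) \circ S(L)$ from Figure 3 yields $\bar g \in \PGO^+(Q)(L)$ with $j(L)(x \cdot S(L)(\bar g)^{-1}) = 1$. By exactness of the left column of Section \ref{section-auxillarymaps}, $\kernel(j(L)) = \Image(i(L))$, so every $x \in H(L)$ admits a decomposition
$$x = S(L)(\bar g) \cdot i(L)(\bar a), \qquad \bar g \in \PGO^+(Q)(L), \; \bar a \in L^*/L^{*2}.$$

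The next step is to identify $\alpha(L)(x)$ for such a decomposition. At the cocycle level, $x$ is represented by $\sigma \mapsto \tilde g^{-1} \sigma(\tilde g) \cdot i(c_a)(\sigma)$, where $\tilde g \in \Spin(Q)(L^{sep})$ lifts $\bar g$ and $c_a$ is a $\mu_2$-cocycle representing $\bar a$. Since $i(c_a)(\sigma)$ lies in the central subgroup $\mu_2 \subset \Spin(Q)$, it commutes past $\sigma(\tilde g)$, and one sees that the $\Spin(Q)$-cocycle class of this expression agrees with that of $i(c_a)$. Hence $\alpha(L)(x) = i'(L)(\bar a)$, and so
$$\tilde\alpha(L)(x) = a \cdot \Sn(Q_L) \in L^*/\Sn(Q_L).$$

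With this formula in hand, the homomorphism property is a formal consequence of the abelianness of $\HH^1(L, \mu)$: if $x = S(L)(\bar g_x) i(L)(\bar a)$ and $y = S(L)(\bar g_y) i(L)(\bar b)$, then $xy = S(L)(\bar g_x \bar g_y) \cdot i(L)(\bar a \bar b)$ since $S(L)$ and $i(L)$ are group homomorphisms into the abelian group $\HH^1(L, \mu)$, and hence $\tilde\alpha(L)(xy) = ab \cdot \Sn(Q_L) = \tilde\alpha(L)(x) \cdot \tilde\alpha(L)(y)$. The subtle point I expect to be the main obstacle is checking that the class $a \cdot \Sn(Q_L)$ does not depend on the chosen decomposition: if $S(L)(\bar g_1) i(L)(\bar a_1) = S(L)(\bar g_2) i(L)(\bar a_2)$, then $i(L)(\bar a_1 \bar a_2^{-1}) \in \Image(S(L)) = \kernel(\alpha(L))$, so $i'(L)(\bar a_1 \bar a_2^{-1}) = 1$ in $\HH^1(L, \Spin(Q))$. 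Concluding $a_1 \cdot \Sn(Q_L) = a_2 \cdot \Sn(Q_L)$ then requires the identification $\kernel(i'(L)) = \Sn(Q_L)/L^{*2}$, which rests on the description of the connecting map of $1 \to \mu_2 \to \Spin(Q) \to \Oo^+(Q) \to 1$ as the spinor norm together with the centrality of $\mu_2$ in $\Spin(Q)$.
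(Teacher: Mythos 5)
Your proof is correct and follows essentially the same route as the paper's: both decompose each element of $H(L)$ as $S(L)(\bar g)\cdot i(L)(\bar a)$ using $j(L)\circ S(L)=\M(L)$ and $\kernel(j(L))=\Image(i(L))$, and both use the centrality of $\mu$ in $\Spin(Q)$ to see that $\alpha(L)$ is unchanged by the $S(L)$-factor, so that $\tilde\alpha(L)$ is computed by the homomorphism $i''(L)$. The only differences are cosmetic: you establish the centrality step by an explicit cocycle computation where the paper cites (\cite{KMRT98}, Cor.\ 28.4), and your final well-definedness check is redundant, since $\tilde\alpha(L)(x)$ is defined directly from $\alpha(L)(x)$ rather than from a choice of decomposition.
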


\begin{proof}
For $i=1,2$, let $z_i\in H(L)$ with $\tilde{\alpha}(L)(z_i) = x_i \in L^*/\Sn(Q_L)$. By the definition of $H(L)$ and a diagram chase of Figure 2 of Section \ref{section-auxillarymaps}, there exist $[g_i]\in \PGO^+(L)$ such that $S([g_i^{-1}])z_i = i(L)(y_i)$ for $y_i\in \HH^1(L,\mu_2)$. Thus $\tilde{\alpha}(L)(S([g_i^{-1}])z_i )=i'(L)(y_i)=i''(L)(y_i)$. Since $i''(L)$ is a group homomorphism and $H(L)$ is abelian, we see that 
\begin{align*}
\tilde{\alpha}(L)(S([g_1^{-1}])z_1)\tilde{\alpha}(L)(S([g_2^{-1}])z_2) &= i''(L)(y_1)i''(L)(y_2) \\
& = i''(L)(y_1y_2) \\
& =  \tilde{\alpha}(L)(S([g_1^{-1}])z_1S([g_2^{-1}])z_2)\\
& = \tilde{\alpha}(L)(S([g_2^{-1}g_1^{-1}])z_1z_2 ).
\end{align*}

Since $\mu = Z(\Spin(Q))$, we have $\tilde{\alpha}(L)(S([g_i^{-1}]z_i) = \tilde{\alpha}(L)(z_i)$ and $\tilde{\alpha}(L)(S([g_2^{-1}g_1^{-1}]z_1z_2) = \tilde{\alpha}(L)(z_1z_2)$ (\cite{KMRT98}, Corollary 28.4, Pg 386), we conclude $\tilde{\alpha}(L)$ is a group homomorphism. \end{proof}

From the definition of $H(L)$, it follows that $\Image\brac{i(L)}\subseteq H(L)$. Further by a chase of the following square which is part of Figure 2, we see that $\tilde{\alpha}(L)(i(L)[fL^{*2}])= [f]$ for each $f\in L^*$.

\begin{figure}[hh]
\[\begin{tikzcd}
\red{fL^{*2}\in }\HH^1(L, \mu_2) \arrow[r, "id"] \arrow[d, "i(L)"] & \red{fL^{*2}\in }\HH^1(L, \mu_2)\arrow[d, "i'(L)"]  \\
  \HH^1(L, \mu) \arrow[r, "\alpha(L)"] &   \red{[f]\in } \HH^1(L, \Spin(Q))
  \end{tikzcd}\]
\end{figure}

Similarly it is immediate to see that $\Image\brac{S(L)}=\kernel\brac{\alpha(L)}\subseteq H(L)$ and further is exactly $\kernel\brac{\tilde{\alpha}(L)}$. 

Scharlau's norm principle implies that the norm map $\N_{L/K} : \HH^1(L,\mu)\to \HH^1(K,\mu)$ induces the map $\N_{L/K} : H(L)\to H(K)$. Similarly Knebusch's norm principle implies that the norm map $\N_{L/K} : L^*\to K^*$ induces the map $\N_{L/K} : L^*/\Sn(Q_L)\to K^*/\Sn(Q)$. Thus, the following square diagram (labelled SQ for $L/K$) is defined:

\[\begin{tikzcd}
H(L) \arrow[r, "\tilde{\alpha}(L)"] \arrow[d, "\N_{L/K} (\mathrm{Scharlau})"] & L^*/\Sn(Q_L) \arrow[d, "\N_{L/K} (\mathrm{Knebusch})"] \\
H(K) \arrow[r, "\tilde{\alpha}(K)"] & K^*/\Sn(Q)
\end{tikzcd}\]

\begin{theorem}
\label{thmnormprinciplesquare} Let $L/K$ be a finite separable field extension. Then the following are equivalent:
\begin{enumerate}
\item
The norm principle holds for $\Omega(Q)$ over $L/K$.
\item
The square diagram (SQ for $L/K$) commutes.
\end{enumerate}
\end{theorem}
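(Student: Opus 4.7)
The plan is to invoke Lemma \ref{lemmaequivnormprinciple} to replace (1) by the equivalent condition that the norm principle holds for $\alpha : \HH^1(-,\mu)\to \HH^1(-,\Spin(Q))$ over $L/K$, and then move between this condition and commutativity of (SQ) by exploiting exactness of the defining long exact sequence together with the naturality of corestriction.

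The central technical input will be the identity $\kernel(\tilde\alpha(E)) = \kernel(\alpha(E))$ for every intermediate field $E$ between $K$ and $L$. The containment $\kernel(\alpha(E))\subseteq\kernel(\tilde\alpha(E))$ is immediate: $\alpha(E)(x)=0$ forces $x\in H(E)$ and $\tilde\alpha(E)(x)= i''(E)(1)=1$. For the reverse containment, if $\tilde\alpha(E)(x)=1$ then $\alpha(E)(x)=i'(E)(fE^{*2})$ with $f=\Sn(h)$ for some $h\in\Oo^+(Q)(E)$; combining the identity $\alpha(E)\circ i(E)=i'(E)$ (read off the diagram defining $i$ and $i'$) with the Figure 3 relation $i(E)(\Sn(h)E^{*2})=S(E)(\pi(h))$ yields
\[ \alpha(E)(x) = \alpha(E)(i(E)(\Sn(h)E^{*2})) = \alpha(E)(S(E)(\pi(h))) = 0 \]
by exactness of the long sequence that defines $\alpha$ and $S$.

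For the forward direction, assume the norm principle holds for $\alpha$, and let $z\in H(L)$ with $[f]:=\tilde\alpha(L)(z)\in L^*/\Sn(Q_L)$. Lemma \ref{lemmaalphaisagrouphom} together with the identity $\tilde\alpha(L)(i(L)(fL^{*2}))=[f]$ (noted just before that lemma) show that $z':=z\cdot i(L)(f^{-1}L^{*2})$ lies in $\kernel(\tilde\alpha(L))=\kernel(\alpha(L))$. The hypothesis then yields $\N_{L/K}(z')\in\kernel(\alpha(K))$. Since the inclusion $\mu_2\hookrightarrow\mu$ is defined over $K$, corestriction is natural, giving $\N_{L/K}\circ i(L)=i(K)\circ \N_{L/K}$. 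Rewriting $z=z'\cdot i(L)(fL^{*2})$ and applying $\tilde\alpha(K)\circ\N_{L/K}$ produces
\[ \tilde\alpha(K)(\N_{L/K}(z)) = \tilde\alpha(K)(\N_{L/K}(z'))\cdot \tilde\alpha(K)\bigl(i(K)(\N_{L/K}(f)K^{*2})\bigr) = 1\cdot [\N_{L/K}(f)], \]
which by Knebusch's norm principle equals $\N_{L/K}([f])=\N_{L/K}(\tilde\alpha(L)(z))$. Hence (SQ) commutes.

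For the converse, assume (SQ) commutes and let $u\in\kernel(\alpha(L))=\kernel(\tilde\alpha(L))\subseteq H(L)$. Commutativity immediately gives $\tilde\alpha(K)(\N_{L/K}(u))=\N_{L/K}(\tilde\alpha(L)(u))=\N_{L/K}(1)=1$, so $\N_{L/K}(u)\in\kernel(\tilde\alpha(K))=\kernel(\alpha(K))$, which is the norm principle for $\alpha$. The main obstacle is the identification $\kernel(\tilde\alpha)=\kernel(\alpha)$, since $\tilde\alpha$ has a strictly smaller codomain than $\alpha$; once this is verified, everything else is a short diagram chase combining Lemma \ref{lemmaalphaisagrouphom} with naturality of corestriction and Knebusch's norm principle.
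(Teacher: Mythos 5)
Your proposal is correct and follows essentially the same route as the paper: reduce (1) to the norm principle for $\alpha$ via Lemma \ref{lemmaequivnormprinciple}, use the identification $\kernel(\tilde{\alpha})=\kernel(\alpha)$ together with the group-homomorphism property of $\tilde{\alpha}$ to translate an arbitrary $z\in H(L)$ by $i(L)$ of a representative of $\tilde{\alpha}(L)(z)$ into the kernel, and then chase norms. The only difference is that you spell out the verification of $\kernel(\tilde{\alpha})=\kernel(\alpha)$ and the naturality of corestriction with respect to $i$, which the paper records as immediate observations preceding the theorem.
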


\begin{proof}

\textbf{(1) $\implies$ (2)}: By Lemma \ref{lemmaequivnormprinciple}, the norm principle holds for $\alpha$ over $L/K$. Let $z\in H(L)$ and $\tilde{\alpha}(L)(z) = [x]\in L^*/\Sn(Q_L)$ for $x\in L^*$. As observed above, $x\in H(L)$ and $\tilde{\alpha}(L)(x)=[x]\in L^*/\Sn(Q_L)$. Since $\tilde{\alpha}(L)$ is a group homomorphism, $zx^{-1} \in \kernel(\tilde{\alpha}(L)) = \kernel(\alpha(L))$. Since we assume that the norm principle holds for $\alpha$ over $L/K$, $\alpha(K)(\N_{L/K}(zx^{-1}))=1$. As $\N_{L/K}(z), \N_{L/K}(x), \N_{L/K}(zx^{-1})\in H(K)$ and $\alpha(K)$ induces $\tilde{\alpha}(K)$, $\tilde{\alpha}(K)(\N_{L/K}(z)) = \tilde{\alpha}(K)(\N_{L/K}(x)) = [\N_{L/K}(x)]\in K^*/\Sn(Q)$. Hence (SQ for $L/K$) commutes.
\[\begin{tikzcd}
\red{z\in \ }H(L) \arrow[r, "\tilde{\alpha}(L)"] \arrow[d, "\N_{L/K}"] & \red{[x]\in \ }L^*/\Sn(Q_L) \arrow[d, "\N_{L/K}"] \\
\red{\N_{L/K}(z)\in \ }H(K) \arrow[r, "\tilde{\alpha}(K)"] & \red{[\N_{L/K}(x)]\in \ }K^*/\Sn(Q) 
\end{tikzcd}\]

\textbf{(2) $\implies$ (1)}: Let $u\in \kernel(\alpha(L)) = \kernel(\tilde{\alpha}(L))$. By the commutativity of the square (SQ for $L/K$), we have $\tilde{\alpha}(K)\brac{\N_{L/K}(u)}=1$. Hence the norm principle holds for $\alpha$ over $L/K$ and by Lemma \ref{lemmaequivnormprinciple}, it holds for $\Omega(Q)$ over $L/K$. \end{proof}

If $Q/K$ is isotropic, then $\Sn(Q) = K^*$ and hence the square (SQ for $L/K$) commutes for all $L/K$ yielding the following:

\begin{corollary} If $Q/K$ is isotropic, then the norm principle holds for $\Omega(Q)$.
\label{corollaryisotropicOK}
\end{corollary}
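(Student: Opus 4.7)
The plan is to apply Theorem \ref{thmnormprinciplesquare} directly. It suffices to verify that the square (SQ for $L/K$) commutes for every finite separable extension $L/K$. I would reduce this to the claim $\Sn(Q_L) = L^*$ for every such $L$, since then the right-hand column of the square is a map between trivial groups and commutativity is vacuous.

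To establish $\Sn(Q_L) = L^*$, I would first note that isotropy is preserved under arbitrary base change, so $Q_L$ remains isotropic over any finite separable extension $L/K$. Next, I would invoke the standard fact (in characteristic $\neq 2$) that any non-degenerate isotropic quadratic form is universal: it splits off a hyperbolic plane, and the hyperbolic plane already represents every nonzero scalar. Given $a \in L^*$, this universality lets me pick vectors $v, w$ in the underlying space of $Q_L$ with $Q_L(v) = a$ and $Q_L(w) = 1$; then the composition $\tau_v \tau_w$ of the associated reflections lies in $\Oo^+(Q_L)(L)$ and its spinor norm is $Q_L(v) Q_L(w) = a$ modulo $L^{*2}$. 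Hence the spinor norm map $\Oo^+(Q_L)(L) \to L^*/L^{*2}$ is surjective, i.e.\ $\Sn(Q_L) = L^*$, as needed. The degenerate-looking case $\dim Q = 2$ is absorbed by the footnote convention on $\Omega(Q)$, for which the norm principle is automatic.

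There is really no obstacle in this corollary: it is singled out precisely because, once the square-diagram reformulation of Theorem \ref{thmnormprinciplesquare} is in place, spinor-norm surjectivity collapses the right-hand side of (SQ for $L/K$) to the trivial group and Theorem \ref{thmnormprinciplesquare} does the rest. The genuinely difficult cases are the anisotropic ones, where $K^*/\Sn(Q)$ can be nontrivial and the commutativity of (SQ for $L/K$) encodes a real constraint.
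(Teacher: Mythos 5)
Your proof is correct and is essentially the paper's own argument: the paper likewise observes that isotropy forces $\Sn(Q)=K^*$ (via universality of isotropic forms and spinor norms of products of reflections, which you spell out), so the square (SQ for $L/K$) commutes trivially and Theorem \ref{thmnormprinciplesquare} applies. Your extra care with the $\dim Q = 2$ convention and with $\Sn(Q_L)=L^*$ over all $L$ (only the bottom-right corner $K^*/\Sn(Q)$ actually needs to be trivial) is harmless elaboration of the same route.
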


\subsubsection{Reduction to quadratic extensions}

We now show that to prove the norm principle for $\Omega(Q)$, it suffices to consider separable quadratic extensions. More precisely, we prove the following:

\begin{theorem}
\label{theoremreductiontoquadratic} 
\label{theorem-quadraticextensionreduction}
Let $Q/K$ be a quadratic form of even dimension. Suppose that for every finite separable field extension $M/K$,  the norm principle holds for the $M$-group $\Omega(Q)_M$ over every separable quadratic field extension $M'/M$. Then the norm principle holds for $\Omega(Q)$.
\end{theorem}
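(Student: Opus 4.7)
The plan is to apply Theorem~\ref{thmnormprinciplesquare}, which equates the norm principle for $\Omega(Q)$ over $L/K$ with the commutativity of the square (SQ for $L/K$), and to proceed by induction on $n = [L:K]$. The base case $n=1$ is trivial. The main observation is a transitivity property: for a tower $K \subset M \subset L$ of finite separable extensions, if (SQ for $M/K$) commutes for $\Omega(Q)$ and (SQ for $L/M$) commutes for $\Omega(Q)_M$, then (SQ for $L/K$) commutes for $\Omega(Q)$. This follows by vertically stacking the two squares and applying $\N_{L/K} = \N_{M/K} \circ \N_{L/M}$ on both vertical arrows---the Scharlau norm on the left and the Knebusch norm on the right---together with the naturality of $\tilde\alpha$.

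Moreover, the hypothesis of the theorem is inherited by every finite separable $M/K$, since a finite separable extension of $M$ is automatically a finite separable extension of $K$. Consequently, whenever $L/K$ admits a proper intermediate subfield $K \subsetneq M \subsetneq L$, applying the induction hypothesis both to $\Omega(Q)$ over $M/K$ and to $\Omega(Q)_M$ over $L/M$, and then invoking transitivity, settles the case. We are thus reduced to $L/K$ with no proper intermediate subfield. If $[L:K] = 2$, the hypothesis directly gives the result. The main obstacle is the remaining case: $[L:K] > 2$ with no proper intermediate subfield.

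To handle this, the plan is to pass to the Galois closure $\tilde L/K$---whose commutativity (SQ for $\tilde L/K$) is obtained by a parallel induction on $|\Gal(\tilde L/K)|$ exploiting the rich intermediate subfield structure of Galois extensions---and then descend to $L/K$ via a restriction--corestriction argument. The descent hinges on the $2$-primary nature of the groups appearing in (SQ): $H(L) \subseteq \HH^1(L, \mu)$ is $2$-primary (since $\mu$ is), and $L^*/\Sn(Q_L)$ is $2$-torsion (since $\Sn(Q_L) \supseteq L^{*2}$). Using the identity $\N_{\tilde L/L} \circ \mathrm{res}_L^{\tilde L} = [\tilde L:L] \cdot \mathrm{id}$, on the $2$-primary part one can recover information about $\N_{L/K}(u)$ from $\N_{\tilde L/K}(\mathrm{res}(u))$ by inverting the odd part of $[\tilde L:L]$. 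The most delicate technical point, and where the main difficulty lies, is handling the $2$-part of $[\tilde L:L]$: a natural remedy is to interpose the intermediate field $\tilde L^P/L$ for $P$ a Sylow $2$-subgroup of $\Gal(\tilde L/L)$, combining odd-degree descent with a $2$-power degree case that can be filtered through quadratic sub-extensions via $2$-group theory and thereby handled by the hypothesis and transitivity.
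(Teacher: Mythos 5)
Your preliminary reductions (transitivity of the square (SQ) in towers, inheritance of the hypothesis by finite separable extensions of $K$, reduction to extensions with no proper intermediate field) are correct, but the remaining case is exactly where the argument fails, and the gap is not the one you flag. Your plan cannot handle odd-degree extensions. Take $L/K$ cyclic of degree $3$: it has no intermediate fields, it is its own Galois closure, and the hypothesis --- which concerns only quadratic extensions --- gives no foothold, so your ``parallel induction on $|\Gal(\tilde{L}/K)|$'' bottoms out here with nothing to induct on. For non-normal odd-degree $L/K$ the situation is no better: establishing (SQ for $\tilde{L}/K$) already requires the odd-degree subextension $\tilde{L}^{P}/K$ for $P$ a $2$-Sylow subgroup of $\Gal(\tilde{L}/K)$, and the restriction--corestriction descent from $\tilde{L}$ to $L$ only yields
$\tilde{\alpha}(K)\brac{\N_{L/K}(u)}^{[\tilde{L}:L]} = \N_{L/K}\brac{\tilde{\alpha}(L)(u)}^{[\tilde{L}:L]}$
in the $2$-torsion group $K^*/\Sn(Q)$, which is vacuous whenever $[\tilde{L}:L]$ is even. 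Your proposed remedy --- interposing $F=\tilde{L}^{P}$ for $P$ a $2$-Sylow subgroup of $\Gal(\tilde{L}/L)$ --- merely relocates the problem: to descend from $F$ to $L$ by the odd-degree trick you would first need (SQ for $F/K$), and $F/K$ is again an extension whose degree has a nontrivial odd part, so you are back where you started.

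The missing idea is to perform the odd-degree reduction on the \emph{base} rather than on the top. The paper takes $M=N^{P}$, where $N$ is the Galois closure of $L/K$ and $P$ is a $2$-Sylow subgroup of $\Gal(N/K)$, so that $[M:K]$ is odd and $L\otimes_K M\simeq\prod_i M_i$ with each $M_i/M$ of $2$-power degree, hence filtered by quadratic extensions and covered by the hypothesis together with your transitivity observation; this gives commutativity of (SQ for $LM/M$). Crucially, one never proves the norm principle for $M/K$ itself. Instead one checks that $K^*/\Sn(Q)\to M^*/\Sn(Q_M)$ is injective: if $f\in K^*$ becomes a spinor norm over $M$, Knebusch's norm principle makes $\N_{M/K}(f)=f^{[M:K]}$, and hence $f$ (as $[M:K]$ is odd and squares are spinor norms), a spinor norm over $K$. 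A cuboid chase then transfers the commutativity of (SQ for $LM/M$) to that of (SQ for $L/K$). So your $2$-primary/torsion observations are the right ingredients, but they must be applied to the injectivity of the target map $K^*/\Sn(Q)\to M^*/\Sn(Q_M)$ for an odd-degree base change, not to a corestriction from the Galois closure.
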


\begin{proof} Let $L/K$ be any finite separable field extension. By Theorem \ref{thmnormprinciplesquare}, it suffices to show that the square (SQ for $L/K$) commutes. 

There exists a separable field extension $M/K$ with $[M:K]=2m+1$ for\footnote{For instance, take $M$ to be the fixed field of a $2$-sylow subgroup of $\Gal(N/L)$ where $N$ is the Galois closure of $L$ over $K$.} some $m\geq 0$ such that $LM:= L\otimes_K M \simeq \prod_{i=1}^{r}M_i$ where each $[M_i: M]=2^{m_i}$ with $M_i/M$ separable field extensions filtered by quadratic extensions.

%
%
%

By assumption and Lemma \ref{lemmaequivnormprinciple}, the norm principle holds for the map $\alpha$ over each $M_i/M$ and hence over the \'etale algebra $LM/M$. Hence, by Theorem \ref{thmnormprinciplesquare}, the square (SQ for $LM/M$) commutes. 

Note that the natural map $K^*/\Sn(Q)\to M^*/\Sn(Q_M)$ is injective. This is because if $f\in K^*$ becomes a spinor norm from $Q_M$, then by Knesbusch's norm principle, $\N_{M/K}(f) = f^{2m+1}$ is a spinor norm from $Q$ and hence $[f] = 1 \in K^*/\Sn(Q)$ to begin with.

Look at the following cuboid which has commutative front vertical face (SQ for $LM/M$) as well as commutative side, top and bottom faces. A diagram chase and the injectivity of the map $K^*/\Sn(Q)\to M^*/\Sn(Q_M)$ shows that the back vertical face (SQ for $L/K$) commutes.
\[\begin{tikzcd}
H(L)\arrow[ddr, dashed] \arrow[rr, "\tilde{\alpha}(L)"]\arrow[d, "\N"] & & L^*/\Sn(Q_L) \arrow[d, "\N"] \arrow[ddr, dashed]  & &\\
H(K)\arrow[ddr, dashed]\arrow[rr, "\tilde{\alpha}(K)"] & & K^*/\Sn(Q) \arrow[ddr, dashed, hookrightarrow, crossing over] & & & \\
& H(LM)\arrow[rr, "\tilde{\alpha}(LM)"]\arrow[d, "\N"] & & (LM)^*/\Sn(Q_{LM}) \arrow[d, "\N"]  & \\
 & H(M)\arrow[rr, "\tilde{\alpha}(M)"] & & M^*/\Sn(Q_M) & \\
\end{tikzcd}\] \end{proof}

\section{An inductive approach}
In this section, we outline a possible inductive approach to the norm principle question.

Let $Q/K$ be a quadratic form of even dimension as before and let $L/K$ be a finite separable field extension. Let $u\in \kernel(\alpha(L))$. We would like to show $\N_{L/K}(u)\in \kernel(\alpha(K))$. Set $j(L)(u) = [\lambda]\in \HH^1(L, \mu_2)$ for some $\lambda\in L^*$. It follows from Figure 2 in Section \ref{section-auxillarymaps} that $Q_L \simeq \lambda Q_L$.

\textit{Suppose that there exist even dimensional quadratic forms $f_u, g_u$ defined over $K$ such that $Q \simeq f_u \perp g_u$ and $\brac{f_u}_L \simeq \lambda \brac{f_u}_L$.} Note that this immediately implies $\brac{g_u}_L\simeq \lambda \brac{g_u}_L$. Let $R_u := \Oo^+(f_u)\times \Oo^+(g_u)\subset \Oo^+(Q)$. Define an intermediate new group $\tilde{R}_u$ to be the preimage of $R_u$ under the canonical homomorphism $\Spin(Q)\to \Oo^+(Q)$.

We have the following diagram with exact rows:
\[\begin{tikzcd}
1\arrow{r} & \mu_2\arrow{r}\arrow[d, "id"] & \mu \arrow{r}\arrow[hookrightarrow]{d} &  \mu_2 \arrow{r}\arrow[hookrightarrow]{d} & 1 \\
 1\arrow{r} & \mu_2\arrow{r}\arrow[d, "id"] & \tilde{R}_u \arrow{r}\arrow[hookrightarrow]{d} &  R_u \arrow{r}\arrow[hookrightarrow]{d} & 1 \\
 1\arrow{r} & \mu_2\arrow{r}& \Spin(Q) \arrow{r} &  \Oo^+(Q) \arrow{r}& 1
\end{tikzcd}\]

Let $\gamma_u : \HH^1(-,\mu)\to \HH^1(-,\tilde{R}_u)$ be the induced map from the inclusion $\mu\hookrightarrow\tilde{R}_u$.

\begin{lemma}
\label{lemmainductiveapproach}
Let $L/K$ be a finite separable field extension and $u\in \kernel(\alpha(L))$. Assume that there exist even dimensional quadratic forms $f_u, g_u/K$ such that $Q\simeq f_u\perp g_u$ and $j(L)(u)=[\lambda]$ for $\lambda\in L^*$ with $\brac{f_u}_L \simeq \lambda \brac{f_u}_L$ and $\brac{g_u}_L \simeq \lambda \brac{g_u}_L$. If $\N_{L/K}\brac{\kernel\brac{\gamma_u(L)}}\subset \kernel\brac{\gamma_u(K)}$, then $\N_{L/K}(u)\in \kernel\brac{\alpha(K)}$.
\end{lemma}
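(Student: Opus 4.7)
The plan is to split $u$ in $\HH^1(L,\mu)$ as $u = v\cdot w$ where $v \in \kernel(\gamma_u(L))$ (so that the lemma's norm hypothesis controls $\N_{L/K}(v)$) and $j(L)(w) = 1$ (so that Lemma \ref{lemmaspinornorm} controls $\N_{L/K}(w)$). Since $\mu$ is central in $\Spin(Q)$ and hence in $\tilde R_u$, both the connecting map $S(L)$ and the analogous $\partial : (\tilde R_u/\mu)(L) \to \HH^1(L,\mu)$ arising from the central extension $1 \to \mu \to \tilde R_u \to \tilde R_u/\mu \to 1$ are group homomorphisms. Consequently, $\kernel(\alpha(L)) = \Image(S(L))$ and $\kernel(\gamma_u(L)) = \Image(\partial)$ are subgroups of $\HH^1(L,\mu)$, and the commutative square of connecting maps associated to $\tilde R_u \hookrightarrow \Spin(Q)$ identifies $\Image(\partial)$ with the image under $S(L)$ of $\tilde R_u/\mu \hookrightarrow \PGO^+(Q)$.

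To build $v$: using $\brac{f_u}_L \simeq \lambda\brac{f_u}_L$ and $\brac{g_u}_L \simeq \lambda\brac{g_u}_L$, pick similitudes $\phi_1 \in \GO^+(f_u)(L)$ and $\phi_2 \in \GO^+(g_u)(L)$ each of multiplier $\lambda$ (composing with a reflection if necessary to land in $\GO^+$). Their direct sum $\tilde h := (\phi_1,\phi_2) \in \GO^+(Q)(L)$ is a similitude of multiplier $\lambda$. Over $\bar L$ the rescaling $\tilde h/\sqrt\lambda$ lies in $R_u(\bar L)$, and the cocycle $\sigma \mapsto \sigma(\tilde h/\sqrt\lambda)(\tilde h/\sqrt\lambda)^{-1}$ is the diagonal scalar $\pm 1 \in R_u = \Oo^+(f_u)\times\Oo^+(g_u)$, i.e., lies in the image of $\mu/\mu_2 \hookrightarrow R_u$. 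Passing to $R_u/(\mu/\mu_2) = \tilde R_u/\mu$ kills this cocycle, so $[\tilde h]$ lifts to an $L$-point of $\tilde R_u/\mu$. Setting $v := S(L)([\tilde h])$, the identification above gives $v \in \kernel(\gamma_u(L))$; and since $j(L)\circ S(L) = \M(L)$ by Figure 2, $j(L)(v) = [\lambda] = j(L)(u)$.

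Setting $w := u\cdot v^{-1}$: because $\mu \hookrightarrow \tilde R_u \hookrightarrow \Spin(Q)$ factors $\alpha$ through $\gamma_u$, we have $\kernel(\gamma_u(L)) \subseteq \kernel(\alpha(L))$, so $w \in \kernel(\alpha(L))$; and $j(L)(w) = 1$ since $j(L)$ is a group homomorphism. Lemma \ref{lemmaspinornorm} then yields $\N_{L/K}(w) \in \kernel(\alpha(K))$, while the standing hypothesis yields $\N_{L/K}(v) \in \kernel(\gamma_u(K)) \subseteq \kernel(\alpha(K))$. Multiplying, $\N_{L/K}(u) \in \kernel(\alpha(K))$, as required.

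The main obstacle is the Galois-descent step in the construction of $v$: showing that the class $[\tilde h]$ of the block similitude $\tilde h$ lifts to an $L$-point of the quotient $\tilde R_u/\mu$ despite $\tilde h$ not being an $L$-rational isometry. This is precisely where the hypothesis that $\lambda$ is a similitude factor of \emph{both} $f_u$ and $g_u$ — not merely of $Q$ — is essential: it forces the square-root ambiguity in the rescaling $\tilde h \leadsto \tilde h/\sqrt\lambda$ to take values in the diagonal $\mu_2 \subset R_u$ (which is exactly $\mu/\mu_2$), and this is what is absorbed by the quotient $\tilde R_u/\mu$. Without the \emph{joint} similitude condition the obstruction would spill out of the diagonal and prevent the descent.
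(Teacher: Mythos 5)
Your proof is correct, and its skeleton matches the paper's: both factor $u$ in $\HH^1(L,\mu)$ as (an element of $\kernel\brac{\gamma_u(L)}$, whose norm is handled by the standing hypothesis) times (the image under $i(L)$ of a spinor norm class, whose norm is handled by Knebusch's norm principle --- Lemma \ref{lemmaspinornorm} in your version), and both use the joint similitude hypothesis on $f_u$ and $g_u$ at exactly the same point. The difference is in how the splitting is produced. The paper works top--down and purely cohomologically: it pushes $u$ to $\HH^1(L,\tilde{R}_u)$, notes the image dies in $\HH^1(L,R_u)$ because $[\lambda]$ does, pulls back to some $[a]\in\HH^1(L,\mu_2)$ by exactness of $\HH^1(L,\mu_2)\to\HH^1(L,\tilde{R}_u)\to\HH^1(L,R_u)$, and uses the twisting statement (\cite{KMRT98}, Cor.~28.4) to get $i(L)([a])^{-1}u\in\kernel\brac{\gamma_u(L)}$ and, comparing with $\alpha(L)(u)=1$, that $a\in\Sn(Q_L)$. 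You work bottom--up: you construct the $\kernel\brac{\gamma_u(L)}$-factor explicitly as $v=S(L)([\tilde h])$ for a block similitude $\tilde h$, using the descent of $\tilde h/\sqrt{\lambda}$ to an $L$-point of $\tilde{R}_u/\mu$, the compatibility of the connecting maps for $\tilde{R}_u\hookrightarrow\Spin(Q)$, and $j\circ S=\M$ to get $j(L)(v)=[\lambda]$; the complement $w=uv^{-1}$ then has trivial $j$-image and lies in $\kernel(\alpha(L))$ (using that $\Image(S(L))=\kernel(\alpha(L))$ is a subgroup since $\mu$ is central), so Lemma \ref{lemmaspinornorm} applies. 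Your route is more geometric and makes transparent why $\lambda$ must be a similitude factor of each block separately, at the cost of the explicit descent argument; the paper's diagram chase is shorter and never names a similitude. Two small points: you should note that the construction of $\phi_1,\phi_2$ presumes $f_u$ and $g_u$ are nonzero (if one of them is zero the lemma is vacuous since then $\gamma_u=\alpha$), and that composing with a reflection preserves the multiplier while fixing properness, which is what you need to land in $\GO^+$.
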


\begin{proof} Let $\gamma_u(L)(u)=v\in \HH^1(L, \tilde{R}_u)$. Since $\brac{f_u}_L \simeq \lambda \brac{f_u}_L$ and $\brac{g_u}_L \simeq \lambda \brac{g_u}_L$, $[\lambda]$ goes to $1$ in $\HH^1(L, R_u)$. Hence $v$ goes to $1$ in $\HH^1(L, R_u)$ and therefore there exists $a\in L^*$ such that $[a]\in \HH^1(L, \mu_2)$ goes to $v$.

\adjustbox{scale=1, center}{
$\footnotesize{\begin{tikzcd}
& \HH^1(L, \mu_2) \arrow[rr, "i(L)"] \arrow[dl, "id"] \arrow[dd, dashrightarrow] & & \red{u\in}\HH^1(L, \mu) \arrow[rr, "j(L)"] \arrow[dl, "\gamma_u(L)"] \arrow[dd, dashrightarrow, "\alpha(L)"] & &  \red{[\lambda]\in}\HH^1(L, \mu_2) \arrow{dl} \arrow[dd, dashrightarrow] \\
\HH^1(L, \mu_2) \arrow{rr} \arrow[dr, "id"] &  & \red{v\in} \HH^1(L, \tilde{R}_u)   \arrow{rr} \arrow{dr}  &  & \red{1\in} \HH^1(L, R_u)  \arrow{dr} &   \\
& \HH^1(L, \mu_2) \arrow{rr}  & & \red{1\in}\HH^1(L, \Spin(Q)) \arrow{rr} & &  \HH^1(L, \Oo^+(Q))  \\
\end{tikzcd}}$
}

As $\mu\subseteq Z(\tilde{R}_u)$, we have $i([a])^{-1}u$ is in the image of $\brac{\tilde{R}_u/\mu}(L)\to \HH^1(L, \mu)$ (\cite{KMRT98}, Corollary 28.4, Pg 386). Hence $\gamma_u(L)\brac{i([a])^{-1}u}=1$ and therefore $\alpha(L)(i([a])^{-1}u)=1\in \HH^1(L, \Spin(Q))$. Since $\alpha(L)(u)=1$, we see that $\alpha(L)(i[a])=1$, i.e. $a$ is a spinor norm of $Q_L$. By Knebusch's norm principle, $\N_{L/K}(a)$ is a spinor norm of $Q$ and hence $\alpha(K)\brac{i[N_{L/K}(a)]}=1$.

By assumption on $\gamma_u$, we have $\gamma_u(K)\brac{N_{L/K}\brac{i([a])^{-1}u}}=1\in \HH^1(K, \tilde{R}_u)$ and hence this element dies in $\HH^1(L, \Spin(Q))$ also. That is, $\alpha(K)\brac{\N_{L/K}\brac{i([a])^{-1}u}}=1$. This implies $\alpha(K)(\N_{L/K}(u))=1$. \end{proof}

This leads to the following:

\begin{theorem}
\label{theoreminductiveapproach}
Let $L/K$ be a finite separable field extension and $u\in \kernel(\alpha(L))$. Assume that there exist even dimensional quadratic forms $f_u, g_u/K$ such that $Q\simeq f_u\perp g_u$ and $j(L)(u)=[\lambda]$ for $\lambda\in L^*$ with $\brac{f_u}_L \simeq \lambda \brac{f_u}_L$ and $\brac{g_u}_L \simeq \lambda\brac{g_u}_L$. If the norm principle holds for $\Omega(f_u)$ and $\Omega(g_u)$ over $L/K$, then $\N_{L/K}(u)\in \kernel\brac{\alpha(K)}$. \end{theorem}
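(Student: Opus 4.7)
The plan is to first invoke Lemma~\ref{lemmainductiveapproach}, reducing the theorem to the assertion that $\N_{L/K}\bigl(\kernel(\gamma_u(L))\bigr) \subseteq \kernel(\gamma_u(K))$. I would begin by unraveling $\tilde R_u$: since $\Spin(f_u)$ and $\Spin(g_u)$ sit inside $\Spin(Q)$ as mutually commuting subgroups with intersection $\{\pm 1\}$, the multiplication map identifies $\tilde R_u$ with $(\Spin(f_u)\times\Spin(g_u))/\mu_2^{\Delta}$, where $\mu_2^{\Delta}$ denotes the diagonally embedded central $\mu_2$. Writing $\tilde\mu'$ for the preimage of $\mu$ in $\mu_{f_u}\times\mu_{g_u}$ (the centre of $\Spin(f_u)\times\Spin(g_u)$) produces the commutative diagram of central extensions
\[
\begin{tikzcd}
1 \arrow{r} & \mu_2^{\Delta} \arrow{r}\arrow[d,equal] & \tilde\mu' \arrow{r}\arrow[d,hookrightarrow] & \mu \arrow{r}\arrow[d,hookrightarrow] & 1 \\
1 \arrow{r} & \mu_2^{\Delta} \arrow{r} & \Spin(f_u)\times\Spin(g_u) \arrow{r} & \tilde R_u \arrow{r} & 1.
\end{tikzcd}
\]

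Next, the compatibility of the two connecting maps $\HH^1(-,\mu)\to \HH^2(-,\mu_2)$ and $\HH^1(-,\tilde R_u)\to \HH^2(-,\mu_2)$ via $\gamma_u$ shows that $\gamma_u(L)(u)=1$ implies the obstruction for $u$ vanishes, so we may pick a lift $\tilde u\in \HH^1(L,\tilde\mu')$ with image $(u_1,u_2)\in\HH^1(L,\mu_{f_u})\times\HH^1(L,\mu_{g_u})$. Writing $\alpha_{f_u}, \alpha_{g_u}, i_{f_u}, i_{g_u}, i'_{f_u}, i'_{g_u}$ for the obvious analogues of $\alpha, i, i'$ attached to $f_u$ and $g_u$, the outer rectangle of the display forces $(\alpha_{f_u}(L)(u_1), \alpha_{g_u}(L)(u_2))$ to die in $\HH^1(L,\tilde R_u)$. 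By exactness of the central-isogeny sequence of the bottom row, this pair is the image of some $[a]\in \HH^1(L,\mu_2^{\Delta}) \simeq L^*/L^{*2}$, giving $\alpha_{f_u}(L)(u_1)=i'_{f_u}(L)([a])=\alpha_{f_u}(L)(i_{f_u}(L)([a]))$ and the analogous equality for $g_u$. A standard twisting argument for central extensions (two classes in $\HH^1(A)$ with equal image in $\HH^1(G)$ differ by an element of $\kernel(\HH^1(A)\to \HH^1(G))$) then yields $z_i:=u_i\cdot i_{\bullet}(L)([a])^{-1}\in\kernel(\alpha_{\bullet}(L))$ for $\bullet\in\{f_u,g_u\}$.

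Finally, Lemma~\ref{lemmaequivnormprinciple} combined with the hypothesis on $\Omega(f_u), \Omega(g_u)$ gives $\N_{L/K}(z_i)\in \kernel(\alpha_{\bullet}(K))$. The twisting argument applied at $K$ then shows $\alpha_{\bullet}(K)(\N_{L/K}(u_i))=i'_{\bullet}(K)([\N_{L/K}(a)])$ for $\bullet\in\{f_u,g_u\}$, so the pair $(\alpha_{f_u}(K)(\N_{L/K}(u_1)), \alpha_{g_u}(K)(\N_{L/K}(u_2)))$ again lies in the image of $\HH^1(K,\mu_2^{\Delta})$ and hence becomes trivial in $\HH^1(K,\tilde R_u)$. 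Since $\N_{L/K}(\tilde u)\in \HH^1(K,\tilde\mu')$ is a lift of $\N_{L/K}(u)$, the $K$-analogue of the above diagram identifies this trivial element with $\gamma_u(K)(\N_{L/K}(u))$, giving $\N_{L/K}(u)\in\kernel(\gamma_u(K))$ as required. I expect the main obstacle to be setting up the identification $\tilde R_u \simeq (\Spin(f_u)\times\Spin(g_u))/\mu_2^{\Delta}$ and the resulting diagram of central extensions; once this bookkeeping is in place the two hypothesized norm principles combine cleanly through the kernel corrections $z_i$.
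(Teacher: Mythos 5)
Your proposal is correct, and it agrees with the paper up through the first reduction: both arguments invoke Lemma~\ref{lemmainductiveapproach} to reduce the theorem to showing $\N_{L/K}\brac{\kernel\brac{\gamma_u(L)}}\subseteq\kernel\brac{\gamma_u(K)}$. From there the routes genuinely diverge. The paper treats this as one more instance of the norm principle for a connecting map: it applies Lemma~\ref{lemmaequivnormprinciple} to the isogeny $1\to\mu\to\tilde{R}_u\to\tilde{R}_u/\mu\to 1$, builds the associated envelope $f:G\to T$ as in Lemma~\ref{lemmanormprinciplerestatement}, and then outsources the remaining work to Proposition~5.2 of \cite{BM00}, which converts the hypothesized norm principles for $\Omega(f_u)$ and $\Omega(g_u)$ into the norm principle for $G$. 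You instead stay inside Galois cohomology: the identification $\tilde{R}_u\simeq\brac{\Spin(f_u)\times\Spin(g_u)}/\mu_2^{\Delta}$ is correct (the two spin subgroups commute in $C_0(Q)$ and meet in $\{\pm 1\}$, and the resulting connected subgroup surjects onto $R_u$ with kernel $\mu_2$, hence equals the preimage $\tilde{R}_u$), the lift of your class to $\HH^1(L,\tilde\mu')$ via the vanishing of the compatible $\HH^2(-,\mu_2)$ obstructions is legitimate, and the two twisting steps are valid because $\HH^1$ of a central subgroup is an abelian group acting on $\HH^1$ of the ambient group, so classes with equal image differ by a kernel element and multiplication by a kernel element preserves the image. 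Your route buys a self-contained argument that makes explicit how the two factor norm principles are consumed (through the correction classes $z_i$ and the common scalar $a$), at the cost of the bookkeeping around $\tilde\mu'$; the paper's route is shorter but hides the mechanism in the citation. Two small points to tidy: you recycle the letter $u$ for a general element of $\kernel\brac{\gamma_u(L)}$, which is not the $u$ of the theorem statement (that $u$ maps under $\gamma_u(L)$ to $v$, not necessarily to $1$); and when one of $f_u,g_u$ has dimension $2$ the ``center'' $\mu_{f_u}$ is the whole torus $\Spin(f_u)$, so the diagram of finite group schemes degenerates --- the argument still goes through, but this edge case (which the paper also glosses over) deserves a sentence.
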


\begin{proof} Recall the exact sequence of algebraic $K$-groups $1\to \mu\to \tilde{R}_u \to \tilde{R}_u/\mu\to 1$. By Lemma \ref{lemmainductiveapproach}, it suffices to check that $\N_{L/K}\brac{\kernel\brac{\gamma_u(L)}}\subset \kernel\brac{\gamma_u(K)}$, which is equivalent to verifying that the norm principle holds for $ \tilde{R}_u/\mu(-)\to \HH^1(-, \mu)$ over $L/K$ (c.f. Lemma \ref{lemmaequivnormprinciple}). As in Section \ref{section-restatingnormprinciple}, construct $f: G\to T$ given by the following commutative diagram with exact rows and columns. 

\[\begin{tikzcd}
& 1\arrow{d} & 1\arrow{d} & & \\ 
1\arrow{r} & \mu\arrow{r}\arrow{d} & \tilde{R}_u \arrow{r}\arrow{d} & \tilde{R}_u/\mu \arrow{r}\arrow[d, "id"] & 1 \\
1\arrow{r} & R_{Z/K}\mathbb{G}_m \arrow{r}\arrow[d, "f'"] & \textcolor{red}{G} \arrow[r, "h"] \arrow[d, "f"] & \tilde{R}_u/\mu  \arrow{r} & 1 \\
& \textcolor{red}{T} \arrow[r, "id"]\arrow{d} & \textcolor{red}{T} \arrow{d} & & \\ 
&1 & 1 & & \\ 
\end{tikzcd}\]

By Lemma \ref{lemmanormprinciplerestatement}, it suffices to check the norm principle for $f: G\to T$ over $L/K$ or more generally, the norm principle for $G$ over $L/K$. Then (\cite{BM00}, Proposition 5.2) along with the hypothesis that the norm principle holds for $\Omega(f_u)$ and $\Omega(g_u)$ concludes the proof. \end{proof}

\section{R-equivalence classes of $\PGO^+(Q)(K)$}
Let $G$ be a linear algebraic group defined over $K$ and $L/K$, a field extension. Then $x, y\in G(L)$ are said to be \textit{$R$-equivalent} if there exists an $L$-rational map $f : \mathbb{A}^1_{L}\dashrightarrow G$ defined at $0$ and $1$ sending $0 \leadsto x$ and $1 \leadsto y$. This defines an equivalence relation on $G(L)$ (\cite{Gi97}, Section II.1). Let $RG(L)$ denote the normal subgroup of elements in $G(L)$ which are $R$-equivalent to the identity $e_G$. We denote the quotient group $G(L)/RG(L)$ by $G(L)/R$. This is the group of $R$-equivalence classes introduced by Manin for the $L$-points of the variety underlying group $G$.

The norm principles  in  (\cite{Gi93}, \cite{Me96}) are stated in general for $R$-trivial elements (i.e, elements $R$-equivalent to $e_G$). 

\begin{theorem}[Gille, \cite{Gi93}]
\label{gillenormprinciple}
Let $1\to \mu\to \tilde{G}\to G \to 1$ be an isogeny of semisimple algebraic groups over $K$ and $\N_{L/K} : \HH^1\brac{L, \mu}\to \HH^1\brac{K, \mu}$ be the induced norm map for a field extension $L/K$. Let $RG(L)$ (resp. $RG(K)$) denote the elements of $G(L)$ (resp $G(K)$) which are $R$-equivalent to the identity.

\[\begin{tikzcd}
    RG(L) \arrow{r}{\delta(L)}  & \HH^1\brac{L, \mu} \arrow{d}{{\N_{L/K}}} \\
    RG(K) \arrow{r}{\delta(K)}       & \HH^1\brac{K, \mu}
    \end{tikzcd}
    \]

 Then $\N_{L/K}\brac{\Image \delta(L) : RG(L) \to \HH^1\brac{L, \mu}} \subseteq \brac{\Image \delta(K) : RG(K) \to \HH^1\brac{K, \mu}}$.
\end{theorem}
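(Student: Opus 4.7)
The plan is to transport the $R$-equivalence data on $G(L)$ to a cohomology class on a $K$-rational curve via Weil restriction, and then realize that class as arising from an $R$-trivial $K$-point of $G$.

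First, fix $g\in RG(L)$ with $\delta(L)(g)=\xi\in \HH^1(L,\mu)$. After reducing to a single step of the $R$-equivalence chain connecting $g$ to $e_G$, we obtain an $L$-morphism $f: U\to G_L$ on an open $U\subset \mathbb{A}^1_L$ containing $\{0,1\}$ with $f(0) = e_G$ and $f(1) = g$. Pulling back the $\mu_L$-torsor $\tilde{G}_L\to G_L$ along $f$ yields a $\mu_L$-torsor $P\to U$ whose fiber at $0$ is trivial and whose fiber at $1$ represents $\xi$. Now apply the Weil restriction $R_{L/K}$ to $P\to U$ and push along the norm $R_{L/K}(\mu_L)\to \mu$: this gives a $\mu$-torsor $Q$ on the $K$-variety $W := R_{L/K}(U)$. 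Pulling back along the diagonal $K$-embedding $\iota: \mathbb{A}^1_K\hookrightarrow R_{L/K}(\mathbb{A}^1_L)$, which meets $W$ in an open $V\subset \mathbb{A}^1_K$ containing $\{0,1\}$, produces a $\mu$-torsor $Q_0$ on $V$ whose fibers at $0$ and $1$ are trivial and $\N_{L/K}(\xi)$ respectively, by compatibility of corestriction with specialization at $K$-rational points.

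The decisive step is to produce a $K$-morphism $h: V'\to G$ on some open $V'\subset V$ containing $\{0,1\}$ such that $h^*(\tilde{G})$ realizes $Q_0|_{V'}$. Given such an $h$, the element $g' := h(1)h(0)^{-1}\in G(K)$ lies in $RG(K)$ (witnessed by the homotopy $t\mapsto h(t)h(0)^{-1}$) and satisfies $\delta(K)(g') = \N_{L/K}(\xi)$, completing the argument. To construct $h$, one uses the $K$-morphism $V\to R_{L/K}(G_L)$ obtained from $R_{L/K}(f)\circ \iota$; by construction $Q_0$ is the connecting image of this morphism under the isogeny $R_{L/K}(\tilde{G}_L)\to R_{L/K}(G_L)$ composed with the norm on kernels. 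One then descends from $R_{L/K}(G_L)$ back to $G$ by a spreading-out argument at the generic point of $V$, exploiting smoothness of $G$ and the rationality of $V$.

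The main obstacle is precisely this last descent step: for non-commutative (in particular, semisimple) $G$ there is no global group homomorphism $R_{L/K}(G_L)\to G$, so the naive strategy of ``norming the morphism $f$'' fails globally. The key insight is that the $R$-triviality hypothesis supplies the rational-curve flexibility needed to execute the descent at the level of $K$-points along the diagonal $\mathbb{A}^1_K$, compensating for the absence of a group-level norm.
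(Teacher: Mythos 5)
First, a remark on provenance: the paper does not prove this theorem at all; it quotes it from Gille's note \cite{Gi93} (see also \cite{Me96}, Thm 3.9). So there is no internal proof to measure your attempt against, and it must be judged on its own terms. On those terms it has a genuine gap, located exactly where you yourself place ``the decisive step.''

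The construction of the $\mu$-torsor $Q_0$ on $V\subseteq\mathbb{A}^1_K$ is fine: pulling back $\tilde{G}_L\to G_L$ along $f$, applying $R_{L/K}$, pushing along the norm $R_{L/K}(\mu_L)\to\mu$, and restricting to the diagonal copy of $\mathbb{A}^1_K$ is a geometric repackaging of the fact that the corestriction to $\HH^1(K(t),\mu)$ of $\delta(L(t))$ applied to the generic fibre of $f$ is unramified at $0$ and $1$ and specializes there to $1$ and $\N_{L/K}(\xi)$. The conditional endgame is also correct: \emph{if} a $K$-morphism $h\colon V'\to G$ with $h^*(\tilde{G})\simeq Q_0|_{V'}$ existed, then $h(1)h(0)^{-1}$ would be an $R$-trivial $K$-point with $\delta$-image $\N_{L/K}(\xi)$, since $\mu$ is central and $\delta$ is therefore a homomorphism. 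But the existence of such an $h$ is never established, and it cannot be obtained by the means you invoke. Read at the generic point of $V$, it asserts that $\N_{L(t)/K(t)}$ carries the class of the generic fibre of $f$ from $\Image\brac{\delta\colon G(L(t))\to\HH^1(L(t),\mu)}$ into $\Image\brac{\delta\colon G(K(t))\to\HH^1(K(t),\mu)}$ --- that is, an instance of the norm principle for $\delta$ over the extension $L(t)/K(t)$, the very kind of statement being proved, now posed over a function field where no $R$-triviality remains to help: the rational curve has already been spent in producing $Q_0$. ``Spreading out,'' smoothness of $G$, and rationality of $V$ contribute nothing, because there is no element of $G(K(t))$ to spread out; and, as you concede, there is no homomorphism $R_{L/K}(G_L)\to G$ with which to norm the morphism $f$ down. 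Your closing paragraph is an acknowledgment of this obstacle rather than an argument overcoming it. The published proofs break the circle by a genuinely different device: they corestrict at the level of the commutative coefficients $\mu$ over the semilocal ring of $\{0,1\}$ in $\mathbb{A}^1_K$ and compare the two specializations of the resulting class directly via a specialization lemma, without ever lifting the normed class back to a point of $G$ over $K(t)$.
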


A similar statement holds for the norm principles of morphisms $f : G\to T$ where $G$ is a reductive linear algebraic group (\cite{Me96}, Thm 3.9).


In \cite{Me96(2)}, the group of $R$-equivalence classes was computed for adjoint semisimple classical groups. These computations applied to the adjoint group $\PGO^+(Q)$ translates to a natural isomorphism $\PGO^+(Q)(K)/R  \ \simeq  \ \G(Q)/\brac{K^{*2} \  \mathrm{Hyp}(Q)}$ where,

\begin{enumerate}
\item[-] $Q/K$ is a non-degenerate form of dim $2n$,
 \item[-] $\G(Q)$ is the \textit{group of similarities} $\{\lambda \in K^* \ | \ \lambda Q \simeq Q\}$,
 \item[-] $\mathrm{Hyp}(Q)$ is the subgroup generated by $\langle \N_{E/K}\brac{E^*}\ |\  Q_E \simeq \mathbb{H}^n  \rangle$ where $E$ runs over finite extensions of $K$.
 \end{enumerate}
 
The following lemma identifies some quadratic forms which give rise to adjoint groups of type $D_n$ with trivial group of $R$-equivalence classes over the base field.

\begin{lemma}
\label{Rtrivialquadform} Let $K$ be a complete discretely valued field with ring of integers $\mathcal{O}_K$ and residue field $k$ with $\mathrm{char}(k)\neq 2$. Let $\tau = \langle a_1, a_2, \ldots, a_r \rangle$ where each $a_i\in \mathcal{O}_K^*$. Let $\pi\in K^*$ be a parameter of $K$ and set $ \xi = \tau \otimes \langle 1, \pi \rangle$. Then $\PGO^+(\xi)(K)/R$ is trivial.
\end{lemma}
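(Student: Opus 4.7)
The plan is to use the Merkurjev isomorphism $\PGO^+(\xi)(K)/R \simeq \G(\xi)/(K^{*2}\,\mathrm{Hyp}(\xi))$ recalled just above the statement, and to prove the stronger assertion $\G(\xi) \subseteq \mathrm{Hyp}(\xi)$. The two features to exploit are the Pfister-like product structure $\xi = \tau \otimes \langle 1,\pi\rangle$, which makes $\xi$ hyperbolic over suitable quadratic extensions, and the completeness of $K$, which controls unimodular quadratic forms via their residues.

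First I would dispose of the uniformizer. The scalar $\pi$ is a similarity factor because $\pi\xi = \pi\tau \perp \pi^2\tau \simeq \tau \perp \pi\tau = \xi$. To see $\pi \in \mathrm{Hyp}(\xi)$, take $E = K(\sqrt{-\pi})$: over $E$ the binary form $\langle 1,\pi\rangle$ is hyperbolic, so $\xi_E$ is hyperbolic, and $\pi = \N_{E/K}(\sqrt{-\pi})$ lies in the norm group. Since every $\lambda \in K^*$ has the form $u\pi^m$ with $u \in \mathcal{O}_K^*$, it remains to show $\mathcal{O}_K^* \cap \G(\xi) \subseteq \mathrm{Hyp}(\xi)$.

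For $u \in \mathcal{O}_K^* \cap \G(\xi)$, my approach is in two moves. Step 1: deduce $u \in \G(\tau)$. The isometry $u\xi \simeq \xi$ can be read as $u\tau \perp \pi(u\tau) \simeq \tau \perp \pi\tau$, a comparison of unimodular Springer decompositions over $K$; Springer's theorem for complete DVRs with $\mathrm{char}(k)\neq 2$ then forces $u\tau \simeq \tau$. Step 2: take $E = K(\sqrt{-u\pi})$, which is a genuine quadratic extension since $-u\pi$ has odd valuation. In $E^*/E^{*2}$ one has $-\pi \equiv u$, so $\langle 1,\pi\rangle_E \simeq \langle 1,-u\rangle_E$, giving $\xi_E \simeq \tau_E \perp (-u)\tau_E$; since $u \in \G(\tau) \subseteq \G(\tau_E)$, this form is hyperbolic. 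Consequently $u\pi = \N_{E/K}(\sqrt{-u\pi}) \in \mathrm{Hyp}(\xi)$, and combined with $\pi \in \mathrm{Hyp}(\xi)$ this yields $u \in \mathrm{Hyp}(\xi)$.

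I expect Step 1 to be the conceptual heart of the argument — it is the only place where completeness enters, via Springer's theorem, and absent that hypothesis one could not cleanly reduce the $\G(\xi)$ question to a question about the lower-dimensional form $\tau$. The subsequent hyperbolicity construction is a Pfister-style manipulation in which the extension $E = K(\sqrt{-u\pi})$ is tailored precisely to kill the factor $\langle 1,\pi\rangle$ while using the similarity $u\tau \simeq \tau$.
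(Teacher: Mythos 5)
Your proof is correct and follows essentially the same route as the paper's: both split off $\pi$ via the extension $K(\sqrt{-\pi})$, both use a residue computation over the complete field to show the unit similarity factor $u$ satisfies $u\tau\simeq\tau$ (the paper via the second residue homomorphism into $W(k)$, you via Springer's decomposition plus Hensel), and both conclude by showing $\xi$ becomes hyperbolic over $K(\sqrt{-u\pi})$ so that $u\pi\in\mathrm{Hyp}(\xi)$. The only cosmetic difference is that the paper verifies the hyperbolicity over $K(\sqrt{-u\pi})$ by another residue-and-Hensel argument, whereas you observe directly that $\xi_E\simeq\tau_E\perp(-u)\tau_E\simeq\tau_E\perp(-\tau_E)$ is hyperbolic.
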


\begin{proof} 
Let $\theta\in \G(\xi)$. Thus up to squares in $K^*$, $\theta = x\pi^{\epsilon}$ for some $x\in \mathcal{O}_K^*$ and $\epsilon \in \{0,1\}$. Since $\PGO^+(\xi )(K)/R  \ \simeq  \ \G(\xi )/\brac{K^{*2} \  \mathrm{Hyp}(\xi )}$, we would like to show $x\pi^{\epsilon}\in \mathrm{Hyp}(\xi )K^{*2}$. 

The totally ramified extension $L' = K(\sqrt{-\pi})$ splits $\xi $ and $\pi = N_{L'/K}(\sqrt{-\pi})$. Thus $\pi\in \mathrm{Hyp}(\xi )$. Thus we can assume $\theta = x \in \G(\xi )$ for $x\in \mathcal{O}_K^*$.

Let $\overline{x}$ denote the image of $x$ in $k$ and $\overline{\tau}$, the image of $\tau$ in $W(k)$. Recall the second residue homomorphism $\delta_{2,\pi} : \W(K)\to \W(k)$ with respect to the parameter $\pi$. Then we have $[\overline{\tau}] = \delta_{2,\pi}(\xi)=\delta_{2,\pi}\brac{x \pi^{\epsilon}\xi} = \delta_{2,\pi}\brac{x\xi}  = [\overline{x}\overline{\tau}] \in W(k)$. This shows that $[\overline{\tau}\otimes {\langle 1, -\overline{x} \rangle}_k] = 0$ in $W(k)$.

Look at $L'' = K(\sqrt{-x\pi})$ which is a complete discretely valued field with residue field $k$. Recall the first residue homomorphism $\delta_{1,y} : \W(L'')\to \W(k)$ with respect to some  parameter $y$ of $L''$. Note that $\xi_{L''} \simeq \tau_{L''}\otimes {\langle 1, -x\rangle}_{L''}$ and $\delta_{1,y}(\xi_{L''})= [\overline{\tau}\otimes \langle 1, -\overline{x} \rangle_k] = 0\in W(k)$. Thus by Hensel's Lemma, $[\xi_{L''}] = 0 \in W(L'')$.

Finally as $x\pi = N_{L''/K}(\sqrt{-x\pi})$, $x\pi \in \mathrm{Hyp}(\xi)$. Since $\mathrm{Hyp}(\xi)$ is the subgroup generated by norms from finite extensions which split $S$, $x \in \mathrm{Hyp}(\xi)$, which concludes the proof. \end{proof}

%
%

\section{Over complete discretely valued fields}
In this section, we work over a complete discretely valued field $K$. We fix the convention of letting  $\overline{\star}$ denote the image of $\star$ in the residue field for $\star$ defined over the ring of integers of a complete discretely valued field. We also let $\mathcal{O}_X$ denote the ring of integers of a complete discretely valued field $X$. We now state the main result of this paper.

\begin{theorem}
\label{maintheorem}
Let $K$ be a complete discretely valued field with residue field $k$ with $\mathrm{char}(k)\neq 2$. Assume that the norm principle holds for $\Omega(q)$ for every non-degenerate quadratic form $q$ of even dimension defined over any finite extension of $k$. Then the norm principle holds for $\Omega(Q)$ for every non-degenerate quadratic form $Q$ of even dimension over $K$.
\end{theorem}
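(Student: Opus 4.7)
The plan is to combine the reduction to quadratic extensions with the inductive approach of Section 3 and a residue analysis over the complete discretely valued field $K$. By Theorem \ref{theorem-quadraticextensionreduction}, it suffices to prove the norm principle for $\Omega(Q)$ over an arbitrary separable quadratic extension $L/K$. Since any finite separable extension of a complete discretely valued field is again complete discretely valued with residue field a finite extension of $k$, the hypothesis on residue extensions passes to all intermediate extensions of $K$, so we may fix a single quadratic extension $L/K$ and work relative to it throughout.

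Next I would argue by induction on $\dim Q$. The base case $\dim Q = 2$ is immediate since $\Omega(Q)$ is then commutative. For the inductive step, take $u \in \kernel \alpha(L)$ and set $j(L)(u) = [\lambda] \in L^*/L^{*2}$. If $[\lambda] = 1$, then Lemma \ref{lemmaspinornorm} concludes directly, so we may assume $\lambda \notin L^{*2}$; the fact that $[\lambda] \in \Image \M(L)$ (which follows from Figure 2 of Section \ref{section-auxillarymaps}) then forces $Q_L \simeq \lambda Q_L$.

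The heart of the argument is to apply Theorem \ref{theoreminductiveapproach}: produce even-dimensional $K$-forms $f_u, g_u$ with $Q \simeq f_u \perp g_u$, $\dim f_u, \dim g_u < \dim Q$, and $(f_u)_L \simeq \lambda (f_u)_L$, $(g_u)_L \simeq \lambda (g_u)_L$. The inductive hypothesis applied to $\Omega(f_u)$ and $\Omega(g_u)$ then finishes the step via Theorem \ref{theoreminductiveapproach}. To produce the decomposition, diagonalize $Q \simeq Q_0 \perp \pi Q_1$ with $Q_0, Q_1$ having entries in $\mathcal{O}_K^*$ and $\pi$ a uniformizer, then analyze the similitude $Q_L \simeq \lambda Q_L$ using the first and second residue homomorphisms on $W(L)$, as in the proof of Lemma \ref{Rtrivialquadform}. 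Splitting into cases according to whether $L/K$ is unramified or ramified and according to the valuation class of $\lambda$, this analysis should identify a proper $K$-subform $f_u$ of $Q$ admitting the required $L$-similitude. The assumed norm principle over finite extensions of the residue field $k$ enters through the effect of the residue homomorphisms on the unramified classes: in the unramified case the residue field of $L$ is a quadratic extension $\ell/k$ over which the residual norm principle is available, and in the ramified case one compares residues over $k$ directly.

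The main obstacle is the minimal case, in which $Q$ admits no proper $K$-decomposition carrying the $\lambda$-similitude. Here one expects $Q_L$ itself to have (or be isometric to) the shape $\tau \otimes \langle 1, \pi_L \rangle$ for some unit form $\tau$ over $L$ and uniformizer $\pi_L$ of $L$. Since $L$ is again a complete discretely valued field, Lemma \ref{Rtrivialquadref}\unskip{} (rather, Lemma \ref{Rtrivialquadform}) then gives $\PGO^+(Q)(L)/R = 1$, so every preimage $[g] \in \PGO^+(Q)(L)$ of $u = S(L)([g])$ is $R$-equivalent to the identity. Gille's norm principle (Theorem \ref{gillenormprinciple}) applied to the isogeny $1 \to \mu \to \Spin(Q) \to \PGO^+(Q) \to 1$ then yields $\N_{L/K}(u) \in \Image S(K) = \kernel \alpha(K)$, closing the induction. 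The delicate point to verify in this last step is that the minimal-case structure really does force $Q_L$ into the shape required by Lemma \ref{Rtrivialquadform}, and this I expect to be the technically hardest part of the argument.
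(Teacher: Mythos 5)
Your overall architecture --- reduction to separable quadratic extensions via Theorem \ref{theorem-quadraticextensionreduction}, induction on $\dim Q$ with base case $\dim Q = 2$, a decomposition $Q \simeq f_u \perp g_u$ fed into Theorem \ref{theoreminductiveapproach}, and a residue analysis of the similitude $Q_L \simeq \lambda Q_L$ after writing $Q \simeq q \perp t p$ with $q,p$ unit forms --- is the paper's. The case where $\lambda$ has odd valuation is also correctly disposed of in the spirit of Lemma \ref{Rtrivialquadform} (this is the paper's Lemma \ref{lemmalambdaisaunit}). The genuine gap is in your ``minimal case.'' The decomposition strategy really does fail in two situations: (i) $L/K$ unramified and $Q$ itself a unit form (i.e.\ $\dim p = 0$), and (ii) $L/K$ totally ramified with $\dim p = \dim q$. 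In both of these, $Q_L$ is (similar to) a \emph{unit} form over $L$ --- in case (ii) because $t p_L \simeq c\, p_L$ modulo squares when $L = K(\sqrt{ct})$ --- so $Q_L$ does \emph{not} acquire the shape $\tau \otimes \langle 1, \pi_L\rangle$ required by Lemma \ref{Rtrivialquadform}, and there is no reason for $\PGO^+(Q)(L)/R$ to be trivial. Your appeal to Gille's theorem therefore does not close the induction. A telling symptom: your argument never invokes the hypothesis on finite extensions of $k$ in any load-bearing way, yet the theorem is conditional on that hypothesis, so an unconditional route through $R$-triviality in the hard cases cannot be correct.

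What the paper does in these two cases is the actual content of the proof, and you would need to supply both arguments. First, a shape lemma (Lemmata \ref{shapeofuandu'} and \ref{lemmaunramifiedu}): for an unramified form one factors $u = u'\cdot i(L)[\theta^{\epsilon'}]$ with $u' \in \HH^1(\mathcal{O}_L,\mu)$ still in $\kernel\brac{\alpha(L)}$; the factor $i(L)[\theta^{\epsilon'}]$ is killed by Knebusch's norm principle via Lemma \ref{lemmaspinornorm}. In case (i), one specializes $\overline{u'}$ to the residue field, applies the \emph{assumed} norm principle for $\overline{Q}$ over $\ell/k$, and lifts back using Hensel's lemma and the triviality of the kernel of $\HH^1(\mathcal{O}_E,\mathcal{G})\to \HH^1(E,\Spin(Q'))$ --- this is the only place the hypothesis on $k$ enters the whole proof. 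In case (ii), one shows by a direct computation with the explicit models of $\HH^1(-,\mu)$ and the ramification pattern of the square of extensions $K\subset L$, $K \subset Z \subset Z_L$ that $\N_{L/K}(u') = 1$ outright (e.g.\ because norms of units from totally ramified quadratic extensions are squares when $\dim q$ is even). Without these two mechanisms the induction does not terminate.
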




By Theorem \ref{theorem-quadraticextensionreduction}, it suffices to show that the norm principle holds for $\Omega(Q)$ over separable quadratic extensions $L/K$. Fix a uniformizing parameter $t$ of $K$. Write the non-degenerate quadratic form $Q/K$ in the form $q\perp tp$ with $q \simeq \langle a_1, a_2, \ldots, a_{\dim q}\rangle$ and $p \simeq \langle b_1, b_2, \ldots, b_{\dim p} \rangle$ for $a_i, b_j\in \mathcal{O}_K^*$. Since $\dim Q = 2n$, $\dim q$ and $\dim p$ have the same parity. 

 Note that $L/K$ is a complete discretely valued field. Let $\ell/k$ denote its residue field. Then one of the following holds:
\begin{enumerate}
\item[-] 
$L/K$ is an unramified extension and $\theta:= t$ is a parameter of $L$. 
\item[-] $L/K$ is totally ramified and $\ell = k$. Further $L\simeq K(\sqrt{ct})$ for some $c\in \mathcal{O}_K^*$ and $\theta:=\sqrt{ct}$ is a parameter of $L$.
\end{enumerate}

 By Lemma \ref{lemmaequivnormprinciple}, it suffices to verify the norm principle for $\alpha$ over $L/K$. Let $u\in \kernel\brac{\alpha(L)} \subseteq H(L)$. Set $j(L)(u) = [\lambda] \in \HH^1(L,\mu_2)\simeq L^*/L^{*2}$ for some representative $\lambda\in L^*$. We would like to show that  ${\alpha}(K)\brac{\N_{L/K}(u)} = 1\in \HH^1(K, \Spin(Q))$. By Corollary \ref{corollaryisotropicOK}, we can assume $q$ and $p$ are anisotropic over $K$.

\subsection{Lemmata}
We begin by reducing to the case when $\lambda$ is a unit in $L$.

\begin{lemma}
Up to squares, $\lambda$ can be assumed to be in $\mathcal{O}_L^*$.
\label{lemmalambdaisaunit}
\end{lemma}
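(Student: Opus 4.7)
The goal is to show that $j(L)(u) = [\lambda] \in \HH^1(L,\mu_2) \simeq L^*/L^{*2}$ admits a representative in $\mathcal{O}_L^*$. Writing $\lambda = \lambda_0\theta^{\epsilon}$ modulo $L^{*2}$ with $\lambda_0 \in \mathcal{O}_L^*$ and $\epsilon \in \{0,1\}$, the case $\epsilon = 0$ is immediate. My plan is to dispose of the $\epsilon = 1$ case by proving directly that $\alpha(K)\brac{\N_{L/K}(u)} = 1$ there, so such $u$ is already handled and we may assume $\epsilon = 0$ thereafter. The starting point is to write $u = S(L)([g])$ for some $[g] \in \PGO^+(Q)(L)$ (by exactness); the commutative diagram of Figure 3 in Section~\ref{section-auxillarymaps} then identifies $[\lambda] = \M(L)([g])$ as the multiplier class of $[g]$, yielding the key identity $Q_L \simeq \lambda Q_L$ over $L$.

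In the totally ramified case ($\theta = \sqrt{ct}$), $ct$ is a square in $L$ so $t \equiv c^{-1} \pmod{L^{*2}}$; hence $Q_L \simeq q_L \perp c^{-1}p_L$ is an integral unit form over $\mathcal{O}_L$ of dimension $2n$, while $\lambda Q_L$ has all odd-valuation diagonal entries. Applying the first residue $\delta_1$ with respect to $\theta$ to the Witt identity $Q_L - \lambda Q_L = 0$ in $\W(L)$ forces $\overline{q}_k \perp \overline{c}^{-1}\overline{p}_k = 0$ in $\W(k)$, making the $2n$-dimensional residue form hyperbolic over $k$. Hensel's lemma (available since $\chara(k) \neq 2$) lifts the hyperbolicity to $Q_L$, so $\Sn(Q_L) = L^*$ and the right column of (SQ for $L/K$) collapses; Theorem~\ref{thmnormprinciplesquare} then delivers $\alpha(K)\brac{\N_{L/K}(u)} = 1$.

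In the unramified case ($\theta = t$), both residues of $Q_L \simeq \lambda Q_L$ (with $\lambda = \lambda_0 t$) reduce to the single Witt-equality $\overline{q}_\ell = \overline{\lambda}_0 \overline{p}_\ell$ in $\W(\ell)$. If $\dim q \neq \dim p$, the dimension mismatch combined with Witt-equality forces one of $\overline{q}_\ell, \overline{p}_\ell$ to carry a hyperbolic plane; Hensel lifting then produces an isotropic vector in $q_L$ or $p_L$, so $Q_L$ is isotropic over $L$ and the argument concludes as in the ramified case. If $\dim q = \dim p$, Witt-equality of equal-dimensional forms is an isometry $\overline{q}_\ell \simeq \overline{\lambda}_0 \overline{p}_\ell$ over $\ell$, which Hensel lifts to $q_L \simeq \lambda_0 p_L$ over $L$; this gives $Q_L \simeq \lambda_0 \cdot \brac{p_L \otimes \langle 1,\pi\rangle}$ with $\pi := \lambda_0^{-1}t$ a uniformizer of $L$. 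Lemma~\ref{Rtrivialquadform} applied over $L$ then yields $\PGO^+(Q_L)(L)/R = 1$, and Gille's norm principle (Theorem~\ref{gillenormprinciple}) forces $\N_{L/K}(\kernel(\alpha(L))) \subseteq \kernel(\alpha(K))$, concluding $\alpha(K)\brac{\N_{L/K}(u)} = 1$.

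The hard part will be the Hensel-lifting steps and selecting the correct route in each sub-case: when dimensions are mismatched or $L/K$ is totally ramified, one lifts residue-level hyperbolicity or isotropy to produce an isotropic $Q_L$ and invokes the Corollary~\ref{corollaryisotropicOK}-style collapse of (SQ); when dimensions match in the unramified case, one instead lifts an isometry of residues and recognizes $Q_L$, up to a scalar, as a form $p_L \otimes \langle 1,\pi\rangle$ of the type in Lemma~\ref{Rtrivialquadform}, so that $R$-triviality of $\PGO^+(Q_L)(L)$ combined with Gille's norm principle does the work. Verifying that these two routes exhaust all configurations of $\dim q$ and $\dim p$ (and that the lifts satisfy the precise hypotheses of the invoked lemmas) is the most technical point.
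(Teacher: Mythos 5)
Your overall strategy is the paper's: write $\lambda=\lambda_0\theta^{\epsilon}$, and dispose of $\epsilon=1$ by using the residues of $Q_L\simeq\lambda Q_L$ to pin down $Q_L$ and then prove the norm principle over $L/K$ outright. Your unramified, $\dim q=\dim p$ sub-case (recognize $Q_L$, up to a scalar, as a unit form tensored with $\langle 1,\pi\rangle$, then apply Lemma \ref{Rtrivialquadform} and Gille's Theorem \ref{gillenormprinciple}) is exactly what the paper does. The gap is in how you close the other two sub-cases. You assert that once $Q_L$ is hyperbolic (resp.\ isotropic), ``the right column of (SQ for $L/K$) collapses'' and Theorem \ref{thmnormprinciplesquare} delivers the conclusion. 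Only the \emph{top-right} corner $L^*/\Sn(Q_L)$ becomes trivial; the bottom-right corner $K^*/\Sn(Q)$ does not, because at this stage $Q$ has been reduced to be anisotropic over $K$. With the top-right corner trivial, commutativity of (SQ for $L/K$) is precisely the assertion that $\tilde{\alpha}(K)\circ\N_{L/K}$ annihilates $H(L)$ --- that is, it is the statement to be proved, not a consequence of the collapse. (Corollary \ref{corollaryisotropicOK} requires $Q$ isotropic over the \emph{lower} field of the square; it does not apply when only $Q_L$ is isotropic.) The paper instead closes the totally ramified case by noting that $[Q_L]=0$ in $\W(L)$ forces $\mathrm{Hyp}(Q_L)=L^*$, hence $\PGO^+(Q)(L)/R=\{1\}$, and then invokes Gille's Theorem \ref{gillenormprinciple} together with Lemma \ref{lemmaequivnormprinciple} to get $\N_{L/K}\brac{\kernel\alpha(L)}\subseteq\kernel\alpha(K)$. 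You should replace your square-collapse step with that argument.

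A second, related problem is your unramified sub-case with $\dim q\neq\dim p$: there you only obtain that $Q_L$ is \emph{isotropic}, and isotropy alone gives neither the (invalid) square collapse nor $R$-triviality of $\PGO^+(Q)(L)$, since $\mathrm{Hyp}(Q_L)$ need not equal $L^*$ when $Q_L$ has a nontrivial anisotropic kernel. To repair this along the paper's lines, observe that the Witt equality $[\overline{q}_\ell]=[\overline{\lambda}_0\overline{p}_\ell]$ identifies the anisotropic kernels; lifting the common anisotropic part $q_0$ by Hensel shows $Q_L$ is Witt-equivalent to $\hat{q}_0\otimes\langle 1,\lambda_0^{-1}t\rangle$, and since $\G(-)$ and $\mathrm{Hyp}(-)$ depend only on the Witt class, Lemma \ref{Rtrivialquadform} and Gille's theorem again give $\PGO^+(Q)(L)/R=\{1\}$ and the desired inclusion of kernels. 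The paper folds this into the single statement $Q_L\simeq q_L\otimes\langle 1,\tilde{\lambda}t\rangle_L$; your separate treatment of the dimension mismatch is reasonable, but it must terminate in the $R$-triviality/Gille argument rather than in Theorem \ref{thmnormprinciplesquare}.
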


\begin{proof}
Without loss of generality, we can assume $\lambda\in \mathcal{O}_L^*$ or $\lambda = \tilde{\lambda}\theta$ for $\tilde{\lambda}\in \mathcal{O}_L^*$ where $\theta$ is a parameter of $L$. 

If $L/K$ is unramified and $\lambda = \tilde{\lambda}\theta$ where $\theta = t$, using $\lambda Q_L  \simeq Q_L$ and the second residue map $\delta_{2,t} : W(L)\to W(\ell)$ for the parameter $t$ of $L$, we see that \[[\overline{p}_{\ell}] = \delta_{2,t}(Q_L)=\delta_{2, t}\brac{\lambda Q_L} = [\overline{\tilde{\lambda}}\overline{q}_{\ell}] \in W(\ell).\] Thus $Q_L\simeq  q_L \otimes {\langle 1, \tilde{\lambda} t\rangle}_L$. By Lemma \ref{Rtrivialquadform}, $\PGO^+(Q)(L)/R =\{1\}$ and hence $\RPGO^+(Q)(L) = \PGO^+(Q)(L)$. Then Theorem \ref{gillenormprinciple} implies that the norm principle holds for our required map $S : \PGO^+(Q)\to \HH^1(-,\mu)$ over the extension $L/K$ and hence for $\Omega(Q)$ over $L/K$ (Lemma \ref{lemmaequivnormprinciple}). 

If $L\simeq K(\sqrt{ct})$ for some $c\in \mathcal{O}_K^*$ and $\lambda = \tilde{\lambda}\theta$ where $\theta:=\sqrt{ct}$ and $Q_L\simeq q_L \perp cp_L$, using $\lambda Q_L  \simeq Q_L$ and the second residue map $\delta_{2,\theta} : W(L)\to W(k)$ for the parameter $\theta$ of $L$, we see that \[0 = \delta_{2,\theta}(Q_L)=\delta_{2, \theta}\brac{\lambda Q_L} = [\overline{\tilde{\lambda}}\overline{q} \perp \overline{\tilde{\lambda}}\overline{cp}] \in W(k).\] Thus $[\overline{q}\perp \overline{c}\overline{p}] = 0\in W(k)$ and hence $[Q_L] = 0 \in W(L)$. This implies $\PGO^+(Q)(L)/R =\{1\}$ and hence $\RPGO^+(Q)(L) = \PGO^+(Q)(L)$. Then Theorem \ref{gillenormprinciple} and Lemma \ref{lemmaequivnormprinciple} imply that the norm principle holds for $\Omega(Q)$ over $L/K$. \end{proof}

Next, we describe the shape of the element $u\in \kernel \brac{\alpha(L)}$ under consideration when $Q_L$ is unramified. We construct a related element $u'\in \HH^1(\mathcal{O}_L, \mu)$ which in fact also lives in $\kernel \brac{\alpha(L)}$. To do so, we make use of the explicit description of the maps $i(L)$ and $j(L)$ given in Section \ref{section-auxillarymaps}. 

\begin{lemma} Let $L$ be a complete discretely valued field with rings of integers $\mathcal{O}_L$, parameter $\theta$ and residue field $\ell$. Let $Q' =  \langle x_1, x_2, \ldots, x_{2r}\rangle$ be a quadratic form over $L$ where each $x_i\in \mathcal{O}_L^*$. Let $u$ be an element in the kernel of $\alpha(L) : \HH^1(L, \mu')\to \HH^1(L, \Spin(Q'))$ where $\mu'$ is the center of $\Spin(Q')$. Assume $j(L)(u) = [\lambda]\in \HH^1(L, \mu_2)$ for some $\lambda\in \mathcal{O}_L^*$. Then there exist $u' \in \HH^1(\mathcal{O}_L, \mu')$ and $\epsilon'\in \mathbb{Z}$ such that $u=u'\brac{i(L)[\theta^{\epsilon'}]}$. 
\label{shapeofuandu'} \end{lemma}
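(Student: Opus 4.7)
The approach is a case analysis on the parity of $r$, using the explicit descriptions of $\HH^1(-,\mu')$ and of the maps $i(L)$ and $j(L)$ recorded in Section \ref{section-auxillarymaps}. The key observation is that because every $x_i \in \mathcal{O}_L^*$, the discriminant of $Q'$ is a unit in $\mathcal{O}_L$, so the \'etale $L$-algebra $Z$ is unramified over $L$. Hence $\mathcal{O}_Z$, the integral closure of $\mathcal{O}_L$ in $Z$, is either a complete DVR with uniformizer $\theta$ or $\mathcal{O}_L \times \mathcal{O}_L$ in the split case, and $\mu'$ extends to a finite \'etale $\mathcal{O}_L$-group scheme whose cohomology $\HH^1(\mathcal{O}_L, \mu')$ admits the same explicit description as $\HH^1(L, \mu')$, but with the unit groups $\mathcal{O}_L^*$ and $\mathcal{O}_Z^*$ replacing $L^*$ and $Z^*$.

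If $r$ is even, so that $\HH^1(L, \mu') = Z^*/Z^{*2}$, I would pick a representative $z$ of $u$ and decompose it modulo $Z^{*2}$ as $z_0 \theta^{\epsilon'}$ with $z_0 \in \mathcal{O}_Z^*$ and $\epsilon' \in \{0,1\}$, using the valuation on $Z$ extending that of $L$. The constraint $j(L)(u) = [\N_{Z/L}(z)] = [\lambda]$ with $\lambda \in \mathcal{O}_L^*$ ensures such a decomposition exists: in the split case $Z = L \times L$, it forces the two component valuations of $z$ to agree modulo $2$, so the $\theta$-contribution lies on the diagonal and equals $i(L)[\theta^{\epsilon'}]$. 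One then reads off $u = [z_0] \cdot i(L)[\theta^{\epsilon'}]$ with $[z_0] \in \mathcal{O}_Z^*/\mathcal{O}_Z^{*2} = \HH^1(\mathcal{O}_L, \mu')$.

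If $r$ is odd, then $\HH^1(L, \mu') = U(L)/U_0(L)$, and I would take a representative $(f, z) \in U(L)$ of $u$ and set $\epsilon' = v_L(f)$. In the field case, the relation $f^4 = \N_{Z/L}(z)$ together with the unramifiedness of $Z/L$ forces $v_Z(z) = 2\epsilon'$, so $(f_1, z_1) := (f/\theta^{\epsilon'}, z/\theta^{2\epsilon'})$ lies in $U(\mathcal{O}_L)$ and $(f,z) = (f_1, z_1) \cdot (\theta^{\epsilon'}, \theta^{2\epsilon'})$, with the last factor being precisely $i(L)[\theta^{\epsilon'}]$. In the split case, I would use the identification $U(L)/U_0(L) \cong L^*/L^{*4}$ sending $[(f,z)] \mapsto [z^{(1)}]$; then the condition $j(L)(u) = [\lambda]$ unit translates to $v_L(z^{(1)})$ being even, which permits writing $z^{(1)} \equiv z_1^{(1)} \theta^{2\epsilon'} \pmod{L^{*4}}$ with $z_1^{(1)} \in \mathcal{O}_L^*$ and yields the factorization.

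The main technical point I expect to have to handle carefully is matching the integrality condition on the resulting representative with the unit-group description of $\HH^1(\mathcal{O}_L, \mu')$, particularly in the split case with $r$ odd, where one may need to invoke the $U_0(L)$-equivalence (twisting by $(\N_{Z/L}(w), w^4)$ for suitable $w \in Z^*$) to bring $(f_1, z_1)$ simultaneously into $\mathcal{O}_L^* \times \mathcal{O}_Z^*$. Once these bookkeeping steps are verified, the desired factorization $u = u' \cdot i(L)[\theta^{\epsilon'}]$ follows.
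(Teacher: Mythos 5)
Your proposal is correct and follows essentially the same route as the paper: both exploit that the discriminant algebra is unramified over $L$ (so $\theta$ remains a parameter of $Z$), split into the same cases on the parity of $r$ and on whether $Z$ is a field or split, extract the $\theta$-valuation of a representative, and use the hypothesis that $j(L)(u)=[\lambda]$ is a unit class to force the relevant valuations to be even in the split cases. The only cosmetic difference is your use of the identification $U(L)/U_0(L)\cong L^*/L^{*4}$ in the split odd case, where the paper instead manipulates triples directly via the relation $[ab,a^4,b^4]=[1]$.
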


\begin{proof}
Let $Z'$ denote the discriminant of $Q'$. Since $Q'$ is unramified over $L$, $Z'$ is an unramified (possibly split) quadratic extension of $L$. Thus $\theta$ is still a parameter of $Z'$. 

Suppose first that $\dim Q' \cong 2 \mod 4$ and $Z'$ is a field. Since $\HH^1(L,\mu') = \frac{U(L)}{U_{0}(L)}$, there exist $\epsilon \in \mathbb{Z}$, $f\in \mathcal{O}_L^*$ and $z\in \mathcal{O}_{Z'}^*$ with $\N_{Z'/L}(z)=f^4$ such that $u= [f\theta^{\epsilon}, z\theta^{2\epsilon}]$. Define $u' := [f, z]$ in $\HH^1(\mathcal{O}_L, \mu')$. Since $i(L)(\theta) = [\theta, \theta^2]$, it is clear that $u = u' \brac{i(L)[\theta^{\epsilon}]}$.


The case when $\dim Q' \cong 2 \mod 4$ but $Z'\simeq L\times L$ is a little more delicate. Again, clearly there exist $\epsilon, \epsilon_1 \in \mathbb{Z}$, $f, z_1, z_2 \in \mathcal{O}_L^*$  with $z_1z_2=f^4$ such that $u= [f\theta^{\epsilon}, z_1\theta^{\epsilon_1}, z_2\theta^{4\epsilon-\epsilon_1}]$. Define $u' := [f, z_1, z_2]$ in $\HH^1(\mathcal{O}_L, \mu')$.

Since $j(L)(u)=[\lambda]$ and up to squares $\lambda \in \mathcal{O}_L^*$, we see that $\epsilon_1$ introduced above is even\footnote{$[f^{-2}z_1\theta^{\epsilon_1-2\epsilon},f^{-2}z_2\theta^{2\epsilon-\epsilon_1}] = [f^{-2}z_1\theta^{\epsilon_1-2\epsilon}, 1 ]\psi[f^{-2}z_1\theta^{\epsilon_1-2\epsilon}, 1 ]^{-1}$ and $[\lambda]=[f^{-2}z_1\theta^{\epsilon_1-2\epsilon}]$.}. Since $[ab, a^4, b^4]=[1]\in \HH^1(L, \mu')$ and $i(L)(\theta) = [\theta, \theta^2, \theta^2]$, we see that

\[u = \begin{cases}
  u' & \text{if } \epsilon_1\cong 0\mod 4, \\
  u' \brac{i(L)[\theta]}  & \text{if } \epsilon_1\cong 2\mod 4.
\end{cases}\]

Suppose now that $\dim Q' \cong 0 \mod 4$. Since $\HH^1(L,\mu') = Z'^*/Z'^{*2}$, there exist $\epsilon, \epsilon_1 \in \mathbb{Z}$ and $z\in \mathcal{O}_{Z'}^*$ (resp.  $z_1, z_2 \in \mathcal{O}_{L}^*$) such that $u= [z\theta^{\epsilon}]$ (resp. $u= [z_1\theta^{\epsilon}, z_2\theta^{\epsilon_1}]$) if $Z'$ is a field (resp. a split extension). Define $u' := [z]$ (resp. $[z_1, z_2]$) in $\HH^1(\mathcal{O}_L, \mu')$. As $\lambda\in \mathcal{O}_L^*$ up to squares, we can assume\footnote{$[\lambda]=[z_1z_2\theta^{\epsilon+\epsilon_1}]$ when $Z'\simeq L\times L$.} that  $\epsilon = \epsilon_1$. Thus $u = u' \brac{i(L)[\theta^{\epsilon}]}\in \HH^1(L, \mu')$. \end{proof}

The following lemma shows that $u'$ defined above still lives in the kernel of $\alpha(L)$.

\begin{lemma}
\label{lemmaunramifiedu} Let $u, u'$ be as in Lemma \ref{shapeofuandu'}. Then $u'\in \kernel\brac{\alpha(L)}$. 
\end{lemma}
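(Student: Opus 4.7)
The proof splits into two stages. First, $\kernel(\alpha(L))$ is a subgroup of $\HH^1(L, \mu')$: since $\mu'$ is central in $\Spin(Q')$, the coboundary $S(L) : \PGO^+(Q')(L) \to \HH^1(L, \mu')$ in the long exact cohomology sequence associated to $1 \to \mu' \to \Spin(Q') \to \PGO^+(Q') \to 1$ is a group homomorphism, and exactness yields $\kernel(\alpha(L)) = \Image(S(L))$. Given the decomposition $u = u' \cdot (i(L)[\theta^{\epsilon'}])$ with $u \in \kernel(\alpha(L))$, showing $u' \in \kernel(\alpha(L))$ therefore reduces to showing $i(L)[\theta^{\epsilon'}] \in \kernel(\alpha(L))$. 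By the commutativity of Figure 2 in Section \ref{section-auxillarymaps}, $\alpha(L) \circ i(L) = i'(L)$, and $i'(L)[\theta^{\epsilon'}]$ corresponds to the class of $\theta^{\epsilon'}$ in $L^*/\Sn(Q'_L)$. So it suffices to verify $\theta^{\epsilon'} \in \Sn(Q'_L)$.

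Second, I would split according to the isotropy of $\overline{Q'}$ over the residue field $\ell$ of $L$. If $\overline{Q'}$ is isotropic, then Hensel's lemma makes $Q'$ isotropic over $L$, so $\Sn(Q'_L) = L^*$ and the spinor norm condition holds automatically. If $\overline{Q'}$ is anisotropic, then so is $Q'$ over $L$. Write $u = S(L)(g)$ for some $g \in \PGO^+(Q')(L)$; by the commutativity of Figure 3, $\M(g) = j(L)(u) = [\lambda]$ with $\lambda \in \mathcal{O}_L^*$, so I can choose a lift $\tilde g \in \GO^+(Q')(L)$ of $g$ with multiplier in $\mathcal{O}_L^*$. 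A valuation comparison, exploiting that $v(Q'(w)) = 0$ for primitive $w \in V \otimes_L \mathcal{O}_L$ (by anisotropy of $\overline{Q'}$) and that multiplication by $\tilde g$ preserves this valuation, forces $\tilde g$ to preserve the integral lattice. Hence $\tilde g \in \GO^+(Q')(\mathcal{O}_L)$, and consequently $g \in \PGO^+(Q')(\mathcal{O}_L)$. Therefore $u = S(L)(g)$ lies in the image of $\HH^1(\mathcal{O}_L, \mu') \to \HH^1(L, \mu')$, i.e., is unramified. Since $u'$ is also unramified by construction, the ramified component $i(L)[\theta^{\epsilon'}]$ must itself be unramified; a case-by-case inspection using the explicit descriptions of $\HH^1(L, \mu')$ and $i(L)$ from Section \ref{section-auxillarymaps} shows that in all three scenarios of Lemma \ref{shapeofuandu'} this forces $\epsilon'$ to be even, so $\theta^{\epsilon'}$ is a square and trivially lies in $\Sn(Q'_L)$.

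The main obstacle is the anisotropic case, where one must show that any element $g \in \PGO^+(Q')(L)$ representing the class $u$ (with $j(L)(u)$ unramified) can be taken to lie in $\PGO^+(Q')(\mathcal{O}_L)$. This rests on the well-known compactness phenomenon for anisotropic orthogonal groups over complete discretely valued fields, together with the observation that the multiplier constraint $\lambda \in \mathcal{O}_L^*$ allows us to clear denominators via scaling.
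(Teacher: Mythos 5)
Your proof is correct, but it takes a genuinely different route from the paper's. You exploit the group structure of $\kernel\brac{\alpha(L)}=\Image\brac{S(L)}$ to reduce the lemma to the single assertion $\theta^{\epsilon'}\in \Sn(Q'_L)$, and then split on the isotropy of the residue form $\overline{Q'}$: the isotropic case is immediate, and in the anisotropic case your lattice-stabilization argument for similitudes with unit multiplier is sound (anisotropy of $\overline{Q'}$ forces $v(Q'(w))=0$ on primitive vectors of the standard lattice, so such a similitude and its inverse preserve that lattice), whence $u$ itself is unramified and the parity check on $\epsilon'$ through the three explicit descriptions of $\HH^1(L,\mu')$ goes through. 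The paper instead argues uniformly, with no case division: it kills $i(L)[\theta^{\epsilon'}]$ by passing to the totally ramified quadratic extension $E=L(\sqrt{\pm\theta})$, invokes Nisnevich's theorem to place $u'$ in $\kernel\brac{\alpha(\mathcal{O}_E)}$, specializes to the common residue field $\ell$, and Hensel-lifts back to $\mathcal{O}_L$. What the paper's route buys --- and what you should be aware of --- is the strictly stronger conclusion $u'\in\kernel\brac{\alpha(\mathcal{O}_L)}$, equivalently $\overline{u'}\in\kernel\brac{\alpha(\ell)}$, which is exactly what Case I of the proof of Theorem \ref{maintheorem} later invokes (``the proof of Lemma \ref{lemmaunramifiedu} in fact shows that we can specialize to the residue field''). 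Your argument delivers this integral refinement in the anisotropic case (there $u=u'$ arises from $S(\mathcal{O}_L)$), but not in the isotropic case, where $\Sn(Q'_L)=L^*$ only yields the kernel statement over $L$; so substituting your proof for the paper's would require a supplementary argument downstream. What your route buys in exchange is independence from Nisnevich's theorem and from the auxiliary extension $E$.
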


\begin{proof}  We have $u = u' \brac{i(L)[\theta^{\epsilon'}]}\in \HH^1(L, \mu')$. 

Since $Q'$ is unramified over $L$, we can view $Q'$ as the generic fiber of a diagonal quadratic form $Q'_{\mathcal{O}_L}$. Further $\Spin(Q'_L)$ is the generic fiber of a smooth reductive group scheme $\mathcal{G} = \Spin(Q'_{\mathcal{O}_L})$ over $\mathcal{O}_L$. 

Set $E := L(\sqrt{-\theta})$ if $\dim Q' \cong 2 \mod 4$ and $E:=L(\sqrt{\theta})$ if $\dim Q' \cong 0 \mod 4$. In either case, $E$ is a totally ramified extension of $L$ with residue field $\ell$. Since $i(E)(\theta)=[1]$, it is clear that $u'$ is the image of $u$ in $\HH^1(E, \mu')$. Since $\alpha(L)(u)=1$, we have $\alpha(E)(u')=1$.

\[\begin{tikzcd}
\red{u\in }\HH^1(L, \mu') \arrow[r, "\alpha(L)"] \arrow{d} & \red{1\in }\HH^1(L, \Spin(Q')) \arrow{d} \\
\red{u'\in }\HH^1(E, \mu') \arrow[r, "\alpha(E)"]  & \red{1\in} \HH^1(E, \Spin(Q'))
\end{tikzcd}\]

Since $u'$ is defined over $\mathcal{O}_L$ and hence $\mathcal{O}_E$ and the kernel of the natural map $\HH^1(\mathcal{O}_E, \mathcal{G})\to \HH^1(E, \Spin(Q'))$ is trivial (\cite{Ni84}), we see that $u'\in \kernel\brac{\alpha(\mathcal{O}_E) :  \HH^1(\mathcal{O}_E, \mu')\to \HH^1(\mathcal{O}_E, \mathcal{G})}$.

\[\begin{tikzcd}
\red{u'\in }\HH^1(\mathcal{O}_E, \mu') \arrow[r, "\alpha(\mathcal{O}_E)"] \arrow{d} & \HH^1(\mathcal{O}_E, \mathcal{G}) \arrow{d} \\
\red{u'\in }\HH^1(E, \mu') \arrow[r, "\alpha(E)"]  & \red{1\in} \HH^1(E, \Spin(Q'))
\end{tikzcd}\]

Specializing to the residue field of $E$,  we see that $\overline{u'}\in \kernel\brac{\alpha(\ell) :  \HH^1(\ell, \mu')\to \HH^1(\ell, \overline{\mathcal{G}})}$. By Hensel's lemma, this implies $u'\in \kernel\brac{\alpha(\mathcal{O}_L) :  \HH^1(\mathcal{O}_L, \mu')\to \HH^1(\mathcal{O}_L, {\mathcal{G}})}$ which shows that $\alpha(L)(u')=1$. \end{proof}

\subsection{Proof of the Theorem \ref{maintheorem}}

We proceed by induction on $\dim Q$. For the base case when $\dim Q = 2$, the norm principle holds for $\Omega(Q)$ because it is a commutative group. Assume that the norm principle holds for $\Omega(\tilde{Q})$ over $L/K$ for all even dimensional quadratic forms $\tilde{Q}/K$ with $\dim \tilde{Q} < \dim Q$. We break up the proof into separate cases depending on the ramification of $L/K$. 

\subsection*{Case I : $L/K$ is unramified}
Recall that we have reduced to the case where $\lambda\in \mathcal{O}_L^*$ up to squares. Using the second residue map again, we see that $[\overline{p}_{\ell}] = \delta_{2,t}(Q_L)=\delta_{2, t}\brac{\lambda Q_L} = [\overline{\lambda} \overline{p}_{\ell}] \in W(\ell)$. Similarly $\overline{q}_{\ell}\simeq \overline{\lambda} \overline{q}_{\ell}$. Thus $\lambda \in \G(q_L)\cap \G(p_L)$. 

Note that if $\dim q$ is odd, since $[{\langle 1, -\lambda \rangle}_L \otimes q_L] = 0 \in W(L)$, then by (\cite{Sch85}, Theorem 10.13), we have $\lambda\in L^{*2}$, i.e $[\lambda]=1\in \HH^1(L, \mu_2)$. Thus by Lemma \ref{lemmaspinornorm}, the norm principle holds.

So we assume that $\dim q$ is even. Therefore $\dim p$ is even too. Set $f_u = q$, $g_u = tp$, $R_u := \Oo^+(f_u)\times \Oo^+(g_u)\subset \Oo^+(Q)$ and $\tilde{R}_u$, the preimage of $R_u$ under the canonical homomorphism $\Spin(Q)\to \Oo^+(Q)$. By Theorem \ref{theoreminductiveapproach}, it suffices to show that the norm principle holds for $\Omega(f_u)$ and $\Omega(g_u)$ over $L/K$, which holds by induction if $\dim q, \dim p\neq 0$.

Without loss of generality\footnote{If $\dim q=0$, $Q$ is similar to the unramified form $p$ over $K$ and $\Omega(Q)\simeq \Omega(p)$. The same proof works in this case.}, suppose that $\dim p=0$ and $Q_K \simeq q_K$. Since $Q$ is unramified over $K$ and hence $Q_L$ is unramified over $L$, using Lemmata \ref{shapeofuandu'} and \ref{lemmaunramifiedu}, we find $u'\in \HH^1(\mathcal{O}_L, \mu)$  such that $u=u'\brac{i(L)[t^{\epsilon'}]}$ for some $\epsilon'\in \mathbb{Z}$ and $\alpha(L)(u')=1$. The proof of Lemma \ref{lemmaunramifiedu} in fact shows that we can specialize to the residue field and get $\overline{u'}$ in the kernel of $\alpha(\ell): \HH^1(\ell, \mu) \to \HH^1(\ell, \Spin(\overline{Q_L}))$. Since the norm principle holds for $\alpha$ for the quadratic form $\overline{Q}$ over $\ell/k$ by assumption and Lemma \ref{lemmaequivnormprinciple}, $\N_{\ell/k}(\overline{u'})$ is in the kernel of $\alpha(k): \HH^1(k, \mu) \to \HH^1(k, \Spin(\overline{Q})$. Thus by Hensel's Lemma, $\alpha(K)\brac{\N_{L/K}(u')}=1$. Lemma \ref{lemmaspinornorm} implies that $\N_{L/K}(i(L)[t^{\epsilon'}])\in \kernel\brac{\alpha(K)}$. Thus $\N_{L/K}(u)\in \kernel\brac{\alpha(K)}$.

\subsection*{Case II : $L/K$ is ramified}
Recall once again that we have reduced to the case where $\lambda\in \mathcal{O}_L^*$ up to squares. Using Hensel's lemma, we can assume in fact that $\lambda\in \mathcal{O}_K^*$. Further we can also assume $\lambda\not\in L^{*2}$ as otherwise, we would be done by Lemma \ref{lemmaspinornorm}.

\textbf{Subcase IIa}: We first look at the situation when $\dim p \neq \dim q$. Then the following Lemma in conjunction with Theorem \ref{theoreminductiveapproach} and our induction hypothesis finishes the proof in this case.

\begin{lemma}
\label{lemmalambdaspinornorm}
Suppose that $\dim p\neq \dim q$. Then $q\otimes \langle 1, -\lambda\rangle$ or $p\otimes \langle 1, -\lambda \rangle$ is isotropic over $K$. Further $Q_K \simeq  f_K\perp g_K$ for $f, g$ even dimensional quadratic forms over $K$ with $\lambda f_L \simeq f_L$ and $\lambda g_L\simeq g_L$. 
\end{lemma}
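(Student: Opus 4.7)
The plan is to combine a Witt ring residue computation with a binary-subform splitting argument. After reducing, without loss of generality, to $\dim q > \dim p$, I would first show isotropy of $\langle 1,-\lambda\rangle \otimes q$ over $K$, and then use the isotropic vector to carve out a $2$-dimensional subform of $q$ that realizes $\lambda$ as a similarity factor already over $K$.

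Since $L = K(\sqrt{ct})$ is totally ramified, $t$ becomes a square up to the unit $c^{-1}$ in $L$, so $Q_L \simeq q_L \perp c^{-1}p_L$. The hypothesis $\lambda Q_L \simeq Q_L$ then says that the unit form $\phi := \langle 1,-\lambda\rangle \otimes (q \perp c^{-1} p)$, defined over $K$, becomes hyperbolic over $L$. Since $\ker(W(K)\to W(L)) = \langle 1,-ct\rangle\cdot W(K)$, write $\phi = \langle 1,-ct\rangle \otimes \eta$ in $W(K)$ with $\eta = \eta_0 \perp t\eta_1$ and $\eta_0,\eta_1$ unit forms. Expanding in $W(K)$ gives $\langle 1,-ct\rangle \otimes \eta = (\eta_0 - c\eta_1) \perp t(\eta_1 - c\eta_0)$. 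Taking first and second residues $\partial_1,\partial_2 : W(K)\to W(k)$ of both sides, using $\partial_2(\phi)=0$ to extract $\overline{\eta_1} = \overline c\,\overline{\eta_0}$ in $W(k)$, and substituting back into the first-residue equation yields
\[
\langle 1,-\overline\lambda\rangle \otimes (\overline q \perp \overline c\,\overline p) = 0 \quad \text{in } W(k).
\]

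A dimension count now delivers the first assertion. Under $\dim q > \dim p$, the displayed identity forces $\langle 1,-\overline\lambda\rangle \otimes \overline q = -\overline c\,\langle 1,-\overline\lambda\rangle \otimes \overline p$ in $W(k)$, whose anisotropic representative has dimension at most $2\dim p < 2\dim q$. Hence $\langle 1,-\overline\lambda\rangle \otimes \overline q$ is isotropic over $k$, and Hensel's lemma lifts this to isotropy of $\langle 1,-\lambda\rangle \otimes q$ over $K$. (If instead $\dim p > \dim q$, the symmetric argument yields isotropy of $\langle 1,-\lambda\rangle \otimes p$ over $K$.)

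For the decomposition, pick a nontrivial zero $(x,y)$ of $\langle 1,-\lambda\rangle \otimes q$, so $q(x) = \lambda q(y)$. Since $q$ is anisotropic and $\lambda \notin K^{*2}$, the vectors $x,y$ are linearly independent; set $V := \mathrm{span}(x,y)$ and $\eta := q|_V$, a $2$-dimensional nondegenerate form. After Gram--Schmidt in the basis $\{y,\ x - (q(x,y)/q(y))y\}$ one finds $\eta \simeq q(y)\cdot \langle 1,\lambda - m^2\rangle$ with $m := q(x,y)/q(y)$. The crucial observation is that $\langle 1,\lambda - m^2\rangle$ represents $\lambda$ via the arguments $(m,1)$, so $\lambda \in \G(\eta)$ and $\lambda\eta \simeq \eta$ already over $K$. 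Writing $q = \eta \perp q'$ and setting $f := \eta$, $g := q'\perp tp$ produces two even-dimensional forms with $Q_K \simeq f_K \perp g_K$; the relation $\lambda f_L \simeq f_L$ is immediate, and Witt cancellation of $f_L$ in $\lambda Q_L \simeq Q_L$ delivers $\lambda g_L \simeq g_L$. The main obstacle is the residue computation: the interaction between the ramified extension $L/K$ and the unit twist $\langle 1,-\lambda\rangle$ has to be tracked carefully. Once the residue identity above is in hand, the dimension count and the elementary identity $\lambda = m^2 + (\lambda - m^2)\cdot 1^2$ driving the final step make the remainder essentially formal.
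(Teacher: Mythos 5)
Your proof is correct and follows essentially the same strategy as the paper: descend isotropy of the unit form $\langle 1,-\lambda\rangle\otimes q$ to the residue field, lift it back to $K$ by Hensel, carve out a binary subform of $q$ admitting $\lambda$ as a similarity factor over $K$, and finish by Witt cancellation. The only (immaterial) difference is in the descent step, where the paper normalizes an explicit isotropic vector over $L$ and reduces it, while you use the Witt kernel of the ramified quadratic extension together with the first and second residue homomorphisms; both routes yield the same identity in $W(k)$.
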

\begin{proof} Since $\lambda$ is a multiplier for the form $q\perp cp$, we have $[{\langle 1, -\lambda \rangle}_L \otimes {q \perp \langle 1, -\lambda \rangle}_L \otimes cp_L] = 0 \in W(L)$. Without loss of generality, assume $\dim q > \dim p$. Hence $q\otimes \langle 1,-\lambda\rangle$ is isotropic over $L$.

Recall that $q \simeq \langle a_1, a_2, \ldots, a_{\dim q}\rangle$ for $a_i\in \mathcal{O}_K^*$ and $\lambda\in \mathcal{O}_K^*$. Thus $q\otimes \langle 1,-\lambda \rangle \simeq \langle a'_1, a'_2, \ldots, a'_{2\dim q}\rangle$ for some $a'_i\in \mathcal{O}_K*$. Hence there exist $w_i \in \mathcal{O}_L^*\cup\{0\}$ not all $0$ and $t_{i}\in \mathbb{Z}$ such that $\sum a'_iw_i^2\theta^{2t_i} = 0$ where $\theta$ is a parameter of $L$. By cancelling factors of $\theta^{2}$ if necessary, we can assume that each $t_j\geq 0$ and at least one $t_i = 0$ (with corresponding $w_i\neq 0$). That is $\overline{q}\otimes \langle 1,- \overline{\lambda}\rangle$ is isotropic over $k$. By Hensel's lemma, $q\otimes \langle 1, -\lambda\rangle$ is isotropic over $K$.

Since we have assumed $q$ is anisotropic over $K$, this implies $q(u) = \lambda q(v)$ for $u, v\in K^{\dim q}$ and $q(u), q(v)\neq 0$. Note that if $u, v$ are linearly dependent over $K$, then $\lambda\in K^{*2}$ contradicting our assumption that $\lambda$ is not a square. Thus $u, v$ span a two dimensional $K$-vector space $W$. Let $f$ be the $K$-quadratic form $q$ restricted to $W$.

Let $q(v) = a, q(u) = \lambda a$ and $b_q(u,v)=x$. Then for orthogonal basis $\{v, u-\frac{x}{a}v\}$, $f_K \simeq {\langle a, \lambda a - \frac{x^2}{a} \rangle}_K$ and for orthogonal basis $\{u, v -\frac{x}{\lambda a}u\}$, $f_K \simeq \langle {\lambda a, a - \frac{x^2}{\lambda a} \rangle}_K$. Thus $\lambda f_L \simeq f_L$ and hence the Lemma follows. \end{proof}

\textbf{Subcase IIb}: Assume now that $\dim p = \dim q$. Since $Q_L$ is unramified over $L$, use Lemmata \ref{shapeofuandu'} and \ref{lemmaunramifiedu} to find $u'\in \HH^1(\mathcal{O}_L, \mu)$  such that $u=u'\brac{i(L)[\theta^{\epsilon'}]}$ for some $\epsilon'\in \mathbb{Z}$ and $\alpha(L)(u')=1$.

\begin{lemma}
$\N_{L/K}(u') = 1 \in \HH^1(K, \mu)$.
\end{lemma}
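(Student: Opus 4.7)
The strategy is to represent $u'$ by explicit cocycle data using the descriptions of $\HH^1(\mathcal{O}_L,\mu)$ and $\HH^1(K,\mu)$ recalled in Section~\ref{section-auxillarymaps}, and then compute $\N_{L/K}(u')$ directly. The crucial features of the setup are that $L/K$ is totally ramified quadratic (so $L$ and $K$ share the residue field $k$), that $Z_L/L$ is unramified (inherited from the fact that $Q_L$ is unramified), and that $\mathrm{char}(k)\neq 2$, which lets us repeatedly invoke Hensel's Lemma. The computation will naturally split by the parity of $n = \dim Q/2$.

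When $n$ is even, $\mu = R_{Z/K}\mu_2$ with $Z/K$ unramified, so $Z_L/Z$ is itself a totally ramified quadratic extension with uniformizer $\theta$. I represent $u' = [z]$ with $z = a + b\theta$, where $a \in \mathcal{O}_Z^*$ (forced, since $z$ is a unit and $\theta$ has positive valuation) and $b\in\mathcal{O}_Z$. Then
\[
\N_{L/K}(u') = [\N_{Z_L/Z}(z)] = [a^2 - b^2\theta^2] = [a^2(1 + (b/a)^2 ct)] \in Z^*/Z^{*2}.
\]
Since $1+(b/a)^2 ct \in 1+t\mathcal{O}_Z \subset \mathcal{O}_Z^{*2}$ by Hensel's Lemma, the class is trivial, i.e.\ $\N_{L/K}(u') = 1$.

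When $n$ is odd, $\mu = \mu_{4[Z]}$ with $Z/K$ totally ramified, and $\HH^1(K,\mu) = U(K)/U_0(K)$. Writing $u' = [(f,z)]$ with $f \in \mathcal{O}_L^*$, $z \in \mathcal{O}_{Z_L}^*$, and $f^4 = \N_{Z_L/L}(z)$, the corestriction takes the explicit form $\N_{L/K}(u') = [(\N_{L/K}(f),\, \N_{Z_L/Z}(z))]$. Direct residue computation gives $\overline{\N_{L/K}(f)} = \bar f^2$, so $\N_{L/K}(f) = g^2$ for some $g\in\mathcal{O}_K^*$ by Hensel; and reducing the relation $f^4 = \N_{Z_L/L}(z)$ modulo the maximal ideal gives $\overline{\N_{Z_L/Z}(z)} = \N_{\bar Z_L/k}(\bar z) = \bar f^4$, so $\N_{Z_L/Z}(z) = z_0^4$ for some $z_0 \in \mathcal{O}_Z^*$, again by Hensel (here using $\mathrm{char}(k)\neq 2$). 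The defining relation yields $\N_{Z/K}(z_0)^4 = g^8$, hence $\N_{Z/K}(z_0) = \pm g^2$, and the class $[(g^2, z_0^4)]$ lies in $U_0(K)$ precisely when the plus sign can be arranged by a judicious choice of $z_0$ among its $\mu_4 \cap Z$-translates.

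The main obstacle is the sign verification in the $n$ odd case. The relation $\N_{Z/K}(z_0)^4 = g^8$ determines $\N_{Z/K}(z_0)$ only up to a fourth root of unity, and the image $\N_{Z/K}(\mu_4 \cap Z)$ depends on whether $\sqrt{-1}$ lies in $K$, $Z$, or neither. I expect the sign to be forced to be $+1$ by combining the constraint $\alpha(L)(u') = 1$ (which imposes that $u'$ is a spinor-type class and hence has restricted cocycle form), the relation $j(L)(u') = [\lambda]$ with $\lambda \in \mathcal{O}_K^*$, and the freedom to modify the representative $(f,z)$ by an element of $U_0(L)$. Working this out carefully is the technical heart of the argument.
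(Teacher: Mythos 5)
Your $n$ even case is complete and is exactly the paper's argument: there $Z_L/Z$ is totally ramified quadratic, so norms of units are squares by the residue computation plus Hensel's Lemma. The overall shape of your $n$ odd case also matches the paper's proof (explicit representatives in $U(L)/U_0(L)$, corestriction given by $\brac{\N_{L/K}(f), \N_{Z_L/Z}(z)}$, and the fact that $\N_{Z_L/L}$ and $\N_{Z_L/Z}$ induce the same norm map on residue fields). But you stop precisely at the step that matters: you reduce to showing that the fourth root of unity $\zeta = \N_{Z/K}(z_0)g^{-2}$ equals $1$, correctly observe that $\zeta$ is a priori only pinned down up to $\mu_4(K)$ and that $\N_{Z/K}\brac{\mu_4(Z)}$ need not absorb it, and then guess that the sign is forced by $\alpha(L)(u')=1$ and $j(L)(u)=[\lambda]$. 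That guess is both unexecuted and off-track; neither hypothesis enters the paper's proof, and as written your argument does not establish the lemma.

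The missing idea is purely local: normalize the representatives so that the ambiguous root of unity is congruent to $1$ modulo the maximal ideal, and then use that $X^4-1$ has distinct roots over $k$ since $\mathrm{char}(k)\neq 2$. Concretely, write $f = x\tilde{f}$ with $\tilde{f}\in\mathcal{O}_K^*$ a lift of $\bar{f}$ and $x \equiv 1 \bmod m_{\mathcal{O}_L}$, and $\N_{Z_L/Z}(z) = \tilde{f}^4y^4$ with $y \equiv 1 \bmod m_{\mathcal{O}_Z}$ (possible by Hensel since the residue of $\N_{Z_L/Z}(z)\tilde{f}^{-4}$ is $1$). Then $\N_{L/K}(u') = [\tilde{f}^2, \tilde{f}^4]\cdot[\N_{L/K}(x), y^4]$; the first factor lies in $U_0(K)$ with $b=\tilde{f}$, and for the second the element $a = \N_{Z/K}(y)\N_{L/K}(x)^{-1}$ satisfies $a^4=1$ \emph{and} $a\equiv 1 \bmod m_{\mathcal{O}_K}$, because both norms are norms of units congruent to $1$; hence $a=1$ and $[\N_{L/K}(x),y^4] = [\N_{Z/K}(y),y^4]\in U_0(K)$. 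Equivalently, in your notation: choose $z_0$ and $g$ with $\bar{z_0}=\bar{g}=\bar{f}$; since $Z/K$ is totally ramified, $\overline{\N_{Z/K}(z_0)} = \bar{f}^2 = \overline{g^2}$, so $\zeta\equiv 1$ and therefore $\zeta = 1$. No appeal to the kernel of $\alpha(L)$ or to $\lambda$ is needed. (Both you and the paper treat only the case where $Z_L$ is a field and leave $Z_L\simeq L\times L$ as similar; that omission is consistent with the paper.)
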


\begin{proof} We only give the proof in the case $Z_L$ is a field. The proof when $Z_L\simeq L\times L$ is similar.

If $\dim q = \dim p$ is odd, then the discriminant $Z$ of $Q = q\perp tp$ is a totally ramified quadratic extension of $K$ and $Z_L := Z\otimes_K L$ is an unramified (possibly split) quadratic extension of $L$. Thus $\theta$ is still a parameter of $Z_L$. Further, if the residue field of $Z_L = \ell'$, then the norm maps $\N_{Z_L/L} : Z_L^*\to L^*$ and $\N_{Z_L/Z} : Z_L^*\to Z^*$ induce the same map $\N_{\ell'/k} : {\ell'}^*\to k^*$ at the residue field level.

\[\begin{tikzcd}
L \arrow[r, "\mathrm{unram}"] & Z_L \\
K \arrow[r, "\mathrm{ram}"] \arrow[u, "\mathrm{ram}"]  & Z \arrow[u, "\mathrm{unram}"] 
\end{tikzcd}\]

Recall that $\HH^1(L,\mu) = \frac{U(L)}{U_{0}(L)}$ and $u' =  [f, z]$  for some $f\in \mathcal{O}_L^*$ and $z\in \mathcal{O}_{Z_L}^*$ with $\N_{Z_L/L}(z)=f^4$. Thus we have $\overline{\N_{Z_L/Z}(z)} = \N_{\ell'/k}(\overline{z})=\overline{f}^4$. By Hensel's Lemma, there exist $\tilde{f}\in \mathcal{O}_K^*$ and $x\cong 1 \mod (m_{\mathcal{O}_L})$ in $\mathcal{O}_L^*$ such that $f = x\tilde{f}$. Similarly, there exist $y, y'\cong 1 \mod (m_{\mathcal{O}_Z})$ in $\mathcal{O}_Z^*$ such that $\N_{Z_L/Z}(z) = \tilde{f}^4y' = \tilde{f}^4y^4$.

Now $\N_{L/K}(u') = [\N_{L/K}(f), \N_{Z_L/Z}(z)] = [\tilde{f}^2\N_{L/K}(x), \tilde{f}^4y^4] = [\tilde{f}^2, \tilde{f}^4][\N_{L/K}(x), y^4]\in \HH^1(K, \mu)$ where $\N_{Z/K}(y^4) = \N_{L/K}(x)^4$. Setting $a = \N_{Z/K}(y)\N_{L/K}(x)^{-1}$, we see that $a^4=1$ and $a\cong 1 \mod m_{\mathcal{O}_K}$. By Hensel's Lemma yet again, $a=1$ and hence $\N_{Z/K}(y)=\N_{L/K}(x)$. Since $[\N(b), b^4]=1\in \HH^1(K, \mu)$ for every $b\in K^*$, we see that $\N_{L/K}(u')=1$.

If $\dim q = \dim p$ is even, then the discriminant $Z$ of $Q = q\perp tp$ is an unramified (possibly split) quadratic extension of $K$ as also $Z_L/L := Z\otimes_K L/L$. Thus $\theta$ is still a parameter of $Z_L$. Note that $Z_L \simeq L\times L$ if and only if $Z\simeq K\times K$. 

\[\begin{tikzcd}
L \arrow[r, "\mathrm{unram}"] & Z_L \\
K \arrow[r, "\mathrm{unram}"] \arrow[u, "\mathrm{ram}"]  & Z \arrow[u, "\mathrm{ram}"] 
\end{tikzcd}\]

Recall that $\HH^1(L,\mu) = Z_L^*/Z_L^{*2}$ and $u' = [z]$ in $\HH^1(L, \mu)$ for some $z\in \mathcal{O}_{Z_L}^*$ if $Z_L$ is a field (resp. a split extension). Since norms of units of totally ramified quadratic extensions are squares and $Z_L/Z$ is ramified, $\N_{L/K}(u') =  1 \in \HH^1(K, \mu)$.

\end{proof}

Lemma \ref{lemmaspinornorm} implies that $\N_{L/K}(i(L)[\theta^{\epsilon'}])\in \kernel\brac{\alpha(K)}$. Since $\N_{L/K}(u)=\N_{L/K}(u')\N_{L/K}\brac{\brac{i(L)[\theta^{\epsilon'}]}}$, we have $\N_{L/K}(u)\in \kernel\brac{\alpha(K)}$ which concludes the proof of Theorem \ref{maintheorem}.

\section{Examples}
Let $G$ be a semisimple simply connected linear algebraic group defined over a field $k$. Then $\HH^1(k, G)$ is trivial if $k$ is a $p$-adic field (\cite{K65}) or a global field of positive characteristic (\cite{H75}).  More generally, suppose that $\charac(k)\neq 2$ and $\cd(k)\leq 2$. Then Bayer-Parimala's proof of Serre's conjecture II shows that $\HH^1(k, G)$ is trivial if $G$ is further assumed to be of classical type (\cite{BP95}). Set $G :=\Spin(q)$ where $q$ is any even dimensional nondegenerate quadratic form over $k$ and let $\mu$ be its center. It is immediate therefore that the norm principle holds for the map $\alpha: \HH^1(-,\mu)\to \HH^1(-, \Spin(q))$ and hence that it holds for the group $\Omega(q)$ defined over $k$.

Now let $G$ be a semisimple simple adjoint linear algebraic group of classical type defined over a number field $k$. Then, $G(k)/R$,  the group of $R$-equivalence classes of the $k$-points of $G$, is trivial (\cite{Gi97} Corollaire III.4.2 and \cite{KP08}, pg 1). Set $G :=\PGO^+(q)$ where $q$ is any even dimensional nondegenerate quadratic form over $k$ and let $\mu$ be the center of $\Spin(q)$. It follows from Theorem \ref{gillenormprinciple} that the norm principle holds for the map $S: \PGO^+(q)(-)\to \HH^1(-, \mu)$ and hence that it holds for the group $\Omega(q)$ defined over $k$.

Recall that a field $k$ is said to have \textit{virtual cohomological dimension} ($\vcd$) $\leq n$ if the cohomological dimension of $k\brac{\sqrt{-1}}$ is $\leq n$. Examples of $\vcd \leq 2$ fields include $\cd \leq 2$ fields and number fields. We begin by showing the following:

\begin{lemma}
\label{lemmaRtrivialPGOoverrealclosed} Let $F$ be a real closed field and $q'$, a non-degenerate even dimensional quadratic form over $F$. Then $\PGO^+(q')(F)/R$ is trivial.
\end{lemma}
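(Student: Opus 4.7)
The plan is to exploit the isomorphism $\PGO^+(q')(F)/R \simeq \G(q')/\brac{F^{*2} \cdot \mathrm{Hyp}(q')}$ recalled just before the lemma, together with the very restricted structure of $F^*/F^{*2}$ for a real closed field $F$. Concretely, since every non-zero element of $F$ is either a square or the negative of a square, we have $F^*/F^{*2}=\{1,-1\}$, so the quotient $\G(q')/\brac{F^{*2} \cdot \mathrm{Hyp}(q')}$ has order at most $2$ and is non-trivial only if the class of $-1$ survives.

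The first main step is to determine exactly when $-1\in \G(q')$. Over a real closed field, a non-degenerate quadratic form $q'$ of dimension $2n$ is classified by its signature $(r,s)$ with $r+s=2n$, and $-q'$ has signature $(s,r)$. Thus $-q'\simeq q'$ if and only if $r=s=n$, that is, if and only if $q'$ is hyperbolic over $F$. I would split into two cases accordingly:

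\emph{Case 1: $q'$ is not hyperbolic over $F$.} Then $-1\notin \G(q')$, and since $F^{*2}\subseteq \G(q')$ always, we conclude $\G(q')=F^{*2}$. In particular $\G(q')/\brac{F^{*2}\cdot \mathrm{Hyp}(q')}=1$, so $\PGO^+(q')(F)/R$ is trivial.

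\emph{Case 2: $q'$ is hyperbolic over $F$.} Then $q'\simeq \mathbb{H}^n$ already over $F$ itself, so $F$ is a (trivial) finite extension of $F$ splitting $q'$, and $\N_{F/F}(F^*)=F^*$ contributes the whole of $F^*$ to the subgroup $\mathrm{Hyp}(q')$. Hence $\mathrm{Hyp}(q')=F^*$, and again $\G(q')/\brac{F^{*2}\cdot \mathrm{Hyp}(q')}=1$.

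There is no serious obstacle here: the proof is essentially a short case analysis enabled by the classification of quadratic forms over a real closed field, and the only minor subtlety to flag is that in Case~2 one must allow the trivial extension $E=F$ in the definition of $\mathrm{Hyp}(q')$ in order to capture the class of $-1$, since by Artin--Schreier the only non-trivial finite extension of $F$ is $F(\sqrt{-1})$, whose norms are sums of two squares and hence lie entirely in $F^{*2}$.
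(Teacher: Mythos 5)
Your proof is correct, but it takes a different route from the paper. The paper's proof is very short: it reduces to the case where $q'$ is anisotropic, notes that over a real closed field an anisotropic form may be taken to be $\langle 1,\ldots,1\rangle$ (every element of $F^*$ being $\pm$ a square), and then invokes Chernousov's theorem \cite{Ch94} that the variety of $\PGO^+$ of the canonical (sum-of-squares) form is stably rational over $F$, which forces triviality of the group of $R$-equivalence classes. You instead work directly with Merkurjev's formula $\PGO^+(q')(F)/R \simeq \G(q')/\brac{F^{*2}\,\mathrm{Hyp}(q')}$ and the Sylvester classification of forms over a real closed field, splitting according to whether $q'$ is hyperbolic; your observation that $-1\in \G(q')$ exactly when the signature is balanced, together with $\mathrm{Hyp}(q')=F^*$ in the hyperbolic case (where one must indeed allow the trivial extension $E=F$, as you rightly flag, since norms from $F(\sqrt{-1})$ only give $F^{*2}$), settles both cases. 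Your argument is more self-contained in that it avoids the stable rationality input from \cite{Ch94}, relying only on the $R$-equivalence computation of \cite{Me96(2)} that the paper already quotes and uses elsewhere (e.g.\ in Lemma \ref{Rtrivialquadform}); the paper's route yields the stronger geometric statement of stable rationality in the anisotropic case, but for the lemma both approaches suffice. One small point worth making explicit in your write-up: the reduction behind the two cases tacitly uses that $\G(Q)=\G(Q_{an})$ and $\mathrm{Hyp}(Q)=\mathrm{Hyp}(Q_{an})$ by Witt cancellation, which is also what underlies the paper's ``without loss of generality anisotropic'' step.
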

\begin{proof}
Without loss of generality we can assume that $q'$ is anisotropic over $F$. Since for every $a\in F$, either $a$ or $-a$ is a square in $F$, we can further assume that  $q' \simeq \langle 1,\ldots,1\rangle$. Then the variety of $\PGO^+(q')$ is stably rational
over $F$ (\cite{Ch94}), whence the claim. \end{proof}

\begin{proposition}
\label{propnormprinciplevcd2}
Let $k$ be a field with $\vcd(k)\leq 2$ and $q$, a non-degenerate even dimensional quadratic form over $k$. Then the norm principle holds for $\Omega(q)$.
\end{proposition}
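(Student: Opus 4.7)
The strategy is to use the Hasse principle for $\HH^1$ of simply connected semisimple groups over $\vcd\leq 2$ fields (due essentially to Scheiderer) to reduce the norm principle for $\Omega(q)$ to a local statement at each real closure of the base field, where the $R$-triviality of $\PGO^+(q)$ recorded in Lemma \ref{lemmaRtrivialPGOoverrealclosed} combines with Gille's norm principle (Theorem \ref{gillenormprinciple}).

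First, by Theorem \ref{theorem-quadraticextensionreduction}, it suffices to verify the norm principle for $\Omega(q)_K$ over every separable quadratic extension $L/K$ with $K/k$ a finite extension; any such $K$ again has $\vcd(K)\leq 2$. By Lemma \ref{lemmaequivnormprinciple} the desired conclusion is equivalent to showing that for every $u\in \kernel(\alpha(L))$ the element $\alpha(K)(\N_{L/K}(u))$ is trivial in $\HH^1(K,\Spin(q))$. The Hasse principle
\[
\HH^1(K,\Spin(q))\;\hookrightarrow\;\prod_v \HH^1(K_v,\Spin(q)),
\]
where $v$ runs over the orderings of $K$ and $K_v$ denotes a real closure at $v$, then reduces the task to verifying triviality after restriction to each $K_v$.

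The projection formula in Galois cohomology gives
\[
\res_v\N_{L/K}(u)\;=\;\prod_{w\mid v}\N_{L_w/K_v}\brac{\res_w u},
\]
so it suffices to check $\alpha(K_v)\brac{\N_{L_w/K_v}(\res_w u)}=1$ for every place $w\mid v$ of $L$. Since $K_v$ is real closed, $L_w$ is either $K_v$ itself or its algebraic closure $K_v(\sqrt{-1})$; in the first case Lemma \ref{lemmaRtrivialPGOoverrealclosed} yields $\PGO^+(q)(L_w)/R=1$, and in the second case $\HH^1(L_w,\Spin(q))=1$ holds trivially. Gille's Theorem \ref{gillenormprinciple}, applied in the reductive formulation of \cite{Me96} to $S:\PGO^+(q)(-)\to\HH^1(-,\mu)$, then gives the norm principle for $S$ over $L_w/K_v$, which by Lemma \ref{lemmaequivnormprinciple} translates into the required local vanishing of $\alpha(K_v)\brac{\N_{L_w/K_v}(\res_w u)}$. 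Patching via the Hasse principle finishes the argument.

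The main obstacle is precisely the appeal to the Hasse principle $\HH^1(K,\Spin(q))\hookrightarrow\prod_v \HH^1(K_v,\Spin(q))$ for $\vcd\leq 2$ fields: this is the one genuinely non-elementary input and is where the hypothesis $\vcd(k)\leq 2$ is used in full strength. Everything else is organisational --- Theorem \ref{theorem-quadraticextensionreduction}, the projection formula for corestriction, and the previously established $R$-triviality of $\PGO^+(q)$ over real (or algebraically) closed fields --- so the proof is driven entirely by assembling these ingredients in the correct order.
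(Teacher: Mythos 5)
Your argument is correct and is essentially the paper's own proof: both reduce, via the Bayer--Parimala Hasse principle $\HH^1(K,\Spin(q))\hookrightarrow\prod_v\HH^1(K_v,\Spin(q))$ for $\vcd\leq 2$ fields, to the real closures $K_v$, where the $R$-triviality of $\PGO^+(q)$ from Lemma \ref{lemmaRtrivialPGOoverrealclosed} combines with Gille's Theorem \ref{gillenormprinciple} to give the local norm principle. The only cosmetic differences are that the paper skips the reduction to quadratic extensions, handles the \'etale algebra $\ell\otimes_k k_v$ in one stroke rather than factor by factor (at a complex factor $L_w$ the relevant point is that $\HH^1(L_w,\mu)$ itself vanishes, not merely $\HH^1(L_w,\Spin(q))$), and first disposes of the non-formally-real case, where the set of orderings is empty and the Hasse principle degenerates to Serre's Conjecture II as proved by Bayer--Parimala.
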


\begin{proof} If $\cd(k)\leq 2$, the discussion above already gives the proof. Hence we can assume that $\cd(k)\neq \vcd(k)$ and hence that $\charac(k)=0$ and $k$ can be ordered (Theorem 1.1, \cite{BP98}).

Let $\ell/k$ be a finite separable extension, $\mu$, the center of $\Spin(q)$, and let $\alpha(-) : \HH^1(-,\mu)\to \HH^1(-, \Spin(q))$ be the natural map between the Galois cohomology sets. Let $\xi\in \kernel \brac{\alpha(\ell)}$ and  set $\eta:=N_{\ell/k}(\xi)$. We would like to show that $\alpha(k)\brac{\eta}$ is trivial in $\HH^1(k, \Spin(q))$.

Let $\Omega$ denote the set of all orderings $v$ of $k$ and $k_v$, the real closure of $k$ at $v$. Then the Hasse principle result of Bayer-Parimala over perfect fields of $\vcd \leq 2$ for semisimple simply connected groups of classical type (\cite{BP98}) gives in particular that the natural map $\HH^1(k, \Spin(q)) \to\prod_{v\in \Omega} \HH^1(k_v,\Spin(q))$ has trivial kernel. 

Thus, it suffices to show that the image of $\eta$ in $H^1(k_v,\Spin(q))$ is trivial for each $v\in \Omega$. Note that $\mathrm{Res}_{k_v}(\eta)=N_{\ell\otimes_k k_v/k_v}(\xi)$. By Lemma \ref{lemmaRtrivialPGOoverrealclosed}, $\PGO^+(q)(\ell\otimes_k k_v)/R$ is trivial and hence as the norm principle holds for $R$-trivial elements (Theorem \ref{gillenormprinciple}), we can conclude that the image of ${\rm Res}_{k_v}(\eta)$ in $H^1(k_v,\Spin(q))$ is trivial. \end{proof}

Since virtual cohomological dimension behaves well with respect to finite extensions, Proposition \ref{propnormprinciplevcd2} in conjunction with Theorem \ref{maintheorem} immediately yields the following:

\begin{corollary}
\label{corollary-examples} Let $K$ be a complete discretely valued field with residue field $k$ such that $\charac(k)\neq 2$ and $\vcd(k)\leq 2$. Then the norm principle holds for $\Omega(Q)$ for every even dimensional non-degenerate quadratic form $Q/K$.
\end{corollary}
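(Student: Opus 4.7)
The plan is to assemble this directly from the two main pieces already in place: Theorem \ref{maintheorem} (the lift from the residue field to $K$) and Proposition \ref{propnormprinciplevcd2} (the norm principle over $\vcd \leq 2$ fields). The only input to verify is that the hypothesis of Theorem \ref{maintheorem}, namely that the norm principle holds for $\Omega(q)$ for every even-dimensional non-degenerate quadratic form $q$ over every finite extension of $k$, is satisfied.

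First I would observe that virtual cohomological dimension is well-behaved under finite extensions. Concretely, if $k'/k$ is any finite extension, then $k'(\sqrt{-1})/k(\sqrt{-1})$ is finite (of degree $1$ or $2$), and cohomological dimension does not increase under finite extensions; hence $\cd(k'(\sqrt{-1})) \leq \cd(k(\sqrt{-1})) \leq 2$, so $\vcd(k') \leq 2$. Moreover $\charac(k') = \charac(k) \neq 2$.

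With this in hand, Proposition \ref{propnormprinciplevcd2} applied to each finite extension $k'/k$ yields that the norm principle holds for $\Omega(q)$ for every even-dimensional non-degenerate quadratic form $q$ defined over $k'$. This is exactly the hypothesis of Theorem \ref{maintheorem}, which then produces the desired conclusion that the norm principle holds for $\Omega(Q)$ for every even-dimensional non-degenerate quadratic form $Q/K$.

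Since both ingredients are already established, there is no genuine obstacle here — the proof is a clean two-step invocation, with the only (very minor) point requiring mention being the stability of $\vcd \leq 2$ under finite extensions. Consequently, I expect the proof to be no more than a few lines.
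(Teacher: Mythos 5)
Your proof is correct and follows the paper's argument exactly: the paper likewise deduces the corollary by noting that $\vcd\leq 2$ is stable under finite extensions, so Proposition \ref{propnormprinciplevcd2} supplies the residue-field hypothesis of Theorem \ref{maintheorem}. Your explicit justification of the $\vcd$ stability (via $k'(\sqrt{-1})/k(\sqrt{-1})$ and monotonicity of $\cd$ under passage to closed subgroups) is a welcome small addition to what the paper leaves implicit.
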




\section{On the triviality of the Tate-Shaferevich set}
\label{tateshaferevichsection}
Let $G$ be a semisimple algebraic group over a number field $K$ and $V^K$, the set of all places of $K$. One of the main finiteness results in the arithmetic theory of linear algebraic groups states that the natural global-to-local map 
\[\rho_{G} : \HH^1(K , G) \to \prod_{v \in V^K} \HH^1(K_v , G)\]
is \textit{proper}, i.e. the preimage of any finite set is finite; in particular, the corresponding Tate-Shafarevich set {\brus SH}$(G) := \kernel\: \rho_G$ is finite. Moreover, if in addition $G$ is simply connected, then   {\brus SH}$(G)=1$, i.e.
$\rho_G$ is injective.

A natural question to ask is if, and to what extent, the above finiteness property can be extended to fields other than number fields. More precisely, let $K$ be a finitely generated field. Can one equip $K$ with a ``natural" set $V$ of discrete valuations such that for a given absolutely almost simple  $K$-group $G$, the natural global-to-local map relative to $V$
\[\rho_{G, V} : \HH^1(K , G) \to \prod_{v \in V} \HH^1(K_v , G)\]
 is proper ? If the answer is affirmative, is it true that for a simply connected group $G$
 the kernel {\brus SH}${}_V(G)$ of  $\rho_{G, V}$  is trivial ?

A natural candidate for such a $V$ appears to be the set of discrete valuations associated to the prime divisors of a model of $K$, i.e. a smooth affine arithmetic scheme with function field $K$
(we call such sets {\it divisorial}).
It is known that divisorial sets $V$ indeed work for adjoint inner forms of type $\textsf{A}_{\ell}$, i.e. for $G={\rm PGL}_{\ell+1}$,
 provided that $\mathrm{char}\: K$ does not divide $\ell + 1$ (cf. \cite{CRR13}); this relies on the finiteness of the unramified Brauer group ${}_{(\ell + 1)}\Br(K)_V$ (\cite{CRR16}). Until recently no other types have been considered.

\subsubsection*{Relating the norm principle and the Tate-Shaferevich set of spinor groups}
\phantom{.}

The first interesting and  widely open case is the one when $K$ is the function field of a curve
$C$ defined over a number field and $G=\Spin(f)$ is the spinor group of a quadratic form $f$ over $K$.
In a recent paper (\cite{CRR16(2)})  it was proved that for the special orthogonal
group $\Oo^+(f)$  and for a divisorial set $V$, the global-to-local map $\rho_{\Oo^+(f),V}$ is proper.  This result in conjunction with the exact sequence \[\Oo^+(f)(K) \longrightarrow \HH^1(K,\mu_2) \longrightarrow \HH^1(K,\Spin(f)) \longrightarrow \HH^1(K,{\Oo^+}(f)),\] and twisting shows that the Tate-Shafarevich set {\brus SH}$_V(\Spin(f))$ is finite if and only if
the group $\mathrm{LGC}(g)=\{[a]\in K^\times/\Sn(g) \ | \ a \mbox{   is a spinor norm of } g \mbox{ over   } K_v \mbox{ for all  } v\in V \}$ is finite for all quadratic forms $g$ over $K$. Note that $\mathrm{LGC}(g)$ is a subset of {\brus SH}$_V(\Spin(g))$.

Note that the residue field $\kappa(v)$ of any valuation $v\in V$ is either a number field or the function field of a curve over a finite field, i.e., $\vcd(\kappa(v))\leq 2$. Theorefore by Corollary \ref{corollary-examples}, the norm principle for $\Omega(f)$ holds over the completion $K_v$. Thus by Theorem \ref{thmnormprinciplesquare}, the square diagram (SQ for $X/K_v$) is commutative for every finite separable extension $X/K_v$. Let $L/K$ be a finite separable extension. It follows then that the obstruction to the commutativity of the square diagram (SQ for L/K) is a subgroup in $\mathrm{LGC}(f)$ having the property: \textit{it is trivial if and only if the norm principle holds for the extension $L/K$}. Thus, the failure  of the norm principle for $\Spin(f)$ would imply that {\brus SH}$_V(\Spin(f))$ is nontrivial.

\end{document}